\newtheorem{thm}{Theorem}[section]
\newtheorem{cor}[thm]{Corollary}
\newtheorem{prop}[thm]{Proposition}
\theoremstyle{definition}
\newtheorem{defin}[thm]{Definition}
\theoremstyle{remark}
\newtheorem{rem}[thm]{Remark}
\numberwithin{equation}{section}
\newcommand{\R}{\mathbb{R}}
\newcommand{\K}{\mathbb{K}}
\newcommand{\cA}{\mathcal{A}}
\newcommand{\cC}{\mathcal{C}}
\newcommand{\cD}{\mathcal{D}}
\newcommand{\cJ}{\mathcal{J}}
\newcommand{\cM}{\mathcal{M}}
\newcommand{\cH}{\mathcal{H}}
\newcommand{\cG}{\mathcal{G}}
\newcommand{\fD}{\mathfrak{D}}
\newcommand{\fd}{\mathfrak{d}}
\newcommand{\fs}{\mathfrak{s}}
\newcommand{\gG}{\Gamma}
\newcommand{\dginf}{D^{G^{\infty}}}
\newcommand{\sinf}{S^{G^{\infty}}}
\newcommand{\mc}{\mathcal}
\newcommand{\plim}{\text{Lim\text{ }}}
\newcommand{\colim}{\text{Colim\text{ }}}
\begin{document}

\title{On Enriched Categories and Induced Representations}
\author[Leslie]{Joshua A. Leslie $^1$}
\email[]{joshualeslie1@mac.com}
\address{$^1$ Mathematics Department, Howard University, Washington DC 20059, USA}
\author[Twum]{Ralph A. Twum $^2$}
\email[]{ratwum@ug.edu.gh}
\address{$^2$ Department of Mathematics, University of Ghana, Legon, Ghana}

\subjclass[2010]{Primary 18D20; Secondary 22D30}

\begin{abstract}
We show that induced representations for a pair of \emph{diffeological Lie groups} exist, in the form of an indexed colimit in the category of diffeological spaces. 
\end{abstract}
\maketitle

\section{Introduction}
Let $G$ be a group. A representation of $G$ may be described as a functor 
$F : G \rightarrow Vect_{\K}$, where $G$ is a single-set category with the elements of $G$ forming the arrows of the category, and $Vect_{\K}$ is the category of vector spaces over a field $\K$. 
Any subgroup $H$ of $G$ is automatically a full subcategory of $G$, hence the representation of
$G$ restricts to a representation of $H$. Thus the functor $F$ restricts to a subfunctor on $H$, which 
we may also denote by $F$.

Now, given a functor $T : H \rightarrow Vect_{\K}$, we consider if $T$ can be \emph{canonically}
extended to a functor from $G$ to $Vect_{\K}$. This process of \emph{induction} has been defined
for a variety of pairs $G, H$:
\begin{itemize}
\item[(a)] $G, H$ finite groups; 
\item[(b)] $G, H$ complex analytic groups, with $H$ a closed subgroup of $G$. This led to the 
Borel-Weil theorem and its extensions by Bott and Kostant(\cite{Bott}, \cite{Kos}, \cite{Dem}, \cite{Lur}). Other cases have also been considered, for instance
\item[(c)] $G, H$ direct limit groups, with $H$ a closed subgroup of $G$. In this case both $G$ and $H$ are infinite dimensional (\cite{Kum}, \cite{DP}).
\end{itemize}

We want to consider the concept of induced representations of infinite dimensional Lie groups using results from enriched category 
theory and diffeology. We will specifically deal with the pair of groups $G = \dginf(M)$ and $H = \sinf(M)$ defined below. 

Our goal is the nonconstructive proof of the following theorem:
\begin{thm}\label{mainthm} Let $\sinf(M)$ be the regular diffeological Lie group of super symplectic diffeomorphisms of a supermanifold $M$. 
$\sinf(M)$is a Lie subgroup of the group of super diffeomorphisms $\dginf(M)$; further, there exist induced representations for Hilbert space representations for these two regular Lie groups.
\end{thm}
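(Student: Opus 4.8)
The plan is to separate the statement into its two structural claims and treat them in turn, then fuse them by an enriched Kan-extension argument. First I would establish that $\sinf(M)$ embeds in $\dginf(M)$ as a regular diffeological Lie subgroup. To this end I equip $\dginf(M)$ with the functional diffeology whose plots $U \to \dginf(M)$ are the maps $U \times M \to M$ that are smooth in the supermanifold sense and fiberwise invertible, and I give $\sinf(M)$ the subspace diffeology determined by preservation of the super-symplectic form $\omega$. The equation $\phi^{*}\omega = \omega$ cuts out a sub-diffeology closed under the group operations, so $\sinf(M)$ is a diffeological subgroup; regularity—the existence of a smooth evolution map carrying a smooth curve in the Lie algebra to its integral curve in the group—is inherited because the super-symplectic vector fields form a closed Lie subalgebra $\fs \subset \fd$ and the flow of a time-dependent symplectic field remains symplectic.

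Second, for the Borel--Weil assertion I would realize the induced representation as the indexed (weighted) colimit promised in the abstract. Regarding $\sinf(M) \subseteq \dginf(M)$ as one-object categories enriched over the cartesian closed category $\mathrm{Diff}$ of diffeological spaces, a Hilbert-space representation $T$ of $\sinf(M)$ is an enriched functor $T : \sinf(M) \to \cH$, where $\cH$ denotes the diffeologically enriched category of Hilbert spaces. The induced representation is then the left Kan extension of $T$ along the inclusion $\iota$, which by the pointwise formula is the coend
\begin{equation}
\mathrm{Ind}(T) \;=\; \int^{\sinf(M)} \dginf(M) \cdot T,
\end{equation}
a single weighted colimit of the diagram $T$ weighted by the hom-object $\dginf(M)$ viewed as an $\sinf(M)$-bimodule. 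Since $\mathrm{Diff}$ is complete and cocomplete, this indexed colimit exists in $\mathrm{Diff}$, and it is exactly here that the nonconstructive flavor of the theorem enters: the representation space is produced as a universal object rather than by exhibiting its sections.

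The final step is to identify this abstract colimit with the classical Borel--Weil space and to endow it with a genuine Hilbert structure carrying a \emph{smooth} and \emph{unitary} $\dginf(M)$-action. I would argue that the diffeological quotient underlying the coend is the associated bundle $\dginf(M) \times_{\sinf(M)} T$ over the homogeneous space $\dginf(M)/\sinf(M)$, whose $\dginf(M)$-invariant plots correspond to holomorphic equivariant maps, and that completing in the $L^{2}$-norm supplied by an invariant pairing yields the Hilbert representation. The main obstacle I anticipate is precisely this transition: the coend is automatically a diffeological space, but promoting it to a Hilbert space with a smooth unitary action of the infinite-dimensional group requires that the colimit diffeology be compatible with the inner-product completion and that regularity of $\dginf(M)$ actually integrate the infinitesimal action. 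Controlling the interplay between the colimit diffeology and the analytic completion—so that smoothness of plots survives passage to the Hilbert completion—is the crux of the whole argument.
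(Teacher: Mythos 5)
Your categorical framing agrees with the paper's: both identify the induced representation with the enriched left Kan extension of $T$ along the inclusion, computed pointwise as the colimit of $T$ weighted by the hom-object, and both read Frobenius reciprocity off the adjunction $Nat(L,S)\simeq Nat(T,SI)$. The gap is in where that colimit is supposed to live. Cocompleteness of $\fD$ gives you the weighted colimit as a diffeological space, but the theorem asserts a \emph{Hilbert space} representation, so the colimit must be produced inside the category $\cC$ of Hilbert spaces (which the paper restricts to linear maps of norm $\le 1$); that category is not obviously cocomplete, and this is precisely the step you postpone to the end as ``the crux.'' The paper closes it with a concrete construction: it forms the Hilbert direct sum $V=\sum_{i}\sum_{f_i\in Fi} f_i\cdot Gi$, lets $R$ be the span of the relations $F\alpha(f_j)\cdot g_i - f_j\cdot G\alpha(g_i)$, and proves that the orthogonal complement $R^{\bot}$, with the orthogonal projection $\pi$ supplying the universal cylinder, satisfies the universal property of $F\ast G$ --- the essential verification being that the comparison map $T$ out of $R^{\bot}$ induced by any other cylinder again has norm $\le 1$. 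Nothing in your outline supplies a substitute for this existence argument.

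Your proposed substitute --- identifying the coend with the associated bundle $\dginf(M)\times_{\sinf(M)}T$ over $\dginf(M)/\sinf(M)$ and completing in an $L^{2}$-norm ``supplied by an invariant pairing'' --- is the classical compact-group picture and is problematic here: for infinite-dimensional groups there is in general no invariant measure on the homogeneous space, so the invariant pairing you invoke is not available, and no complex or holomorphic structure on $\dginf(M)/\sinf(M)$ has been established that would make ``holomorphic equivariant maps'' meaningful. The paper deliberately avoids all of this by never leaving the abstract Hilbert-space category; its proof is nonconstructive in exactly the sense that it exhibits the representation as $R^{\bot}$ rather than as a space of sections. (Your first paragraph, on $\sinf(M)$ being a regular diffeological Lie subgroup, is consistent with the paper, which simply cites Leslie's result that $\fs$ is a strongly integrable diffeological Lie subalgebra of $\fd$; neither you nor the paper proves regularity in detail.)
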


We use existing results from enriched category theory to show that the induced representation exists, first as a Kan extension, which can be written as an \emph{indexed limit}. In order to show that the representation exists, we need to enlarge the category from that of smooth (super-)manifolds to a more convenient category. We use the category of \emph{diffeological spaces}, whose properties we will recall in the following section.  

This paper is organized as follows. In Section 2, we recall some useful facts about Diffeological spaces, and discuss the category of diffeological spaces, which include (super) manifolds. We also define the two groups in Theorem \ref{mainthm} above.

Sections 3 and 4 introduce the main categorical notions: Kan extensions and indexed limits, and we show that induced representations can be interpreted as Kan extensions, which can also be interpreted as indexed limits. We collect these facts and prove Theorem \ref{mainthm} in Section 5. Section 6 contains some remarks on Diffeological categories and regularity of (diffeological) Lie groups. 
\section{Diffeology and Diffeological Spaces}

\subsection{Diffeological Spaces}
The axioms of a diffeological space, or a diffeology on a set, were set forth by Iglesias-Zemmour \cite{Igl}. In the definition of a smooth manifold, we define a maximal atlas consisting of compatible charts on a set $X$. In a diffeology, we focus on the maps themselves between coordinate patches of open sets in $\R^{n}, n\ge0$ and the set $X$, satisfying three axioms. These maps are called \emph{plots} of the diffeology, and the set $X$ with the defined collection of plots is called a diffeological space. The advantage of considering a diffelogical space is that smooth manifolds have a natural diffeology, and so form a full subcategory of the category of diffeological spaces.

\begin{defin}
Let $X$ be a set. An $n$-parametrization of $X$ is defined as a map $f : U \rightarrow X$, where $U$ is an open set in $\R^{n}, n\ge 0$. A constant parametrization is a map of the form $f : U\rightarrow X$ such that $f(u) = x$ for all $u \in U$.

\end{defin}

\begin{defin}
A diffeology $\cD$ on $X$ is a collection of parametrizations of $X$ satisfying the following axioms:

\begin{enumerate}
\item[(D1)] $\cD$ contains the constant parametrizations.
\item[(D2)] If $p: U \rightarrow X$ is a parametrization such that $\forall u \in U$, there exists a neighborhood $V$ of $u$ such that $p|_{V}$ is in $\cD$, then $p$ itself is in $\cD$. In other words, the collection $\cD$ satisfies the axioms of a set-valued sheaf.
\item[(D3)] Given a parametrization $p:U \rightarrow X$ in $\cD$, if $f: V \rightarrow U$ is a smooth map, then $p \circ f$ is a parametrization in $\cD$.
\end{enumerate}
\end{defin}

The parametrizations in $\cD$ are called \emph{plots} of the diffeology on $X$ and the pair $(X,\cD)$ is called a \emph{diffeological space}. 

\begin{defin}
Let $X$ be a set and $\mathcal{F}$ be a collection of parametrizations on $X$. The diffeology on $X$ induced by $\mathcal{F}$ is the intersection of all diffeologies on $X$ containing $\mathcal{F}$. 
\end{defin}

\begin{defin}
Let $(X,\mathcal {F})$ and $(Y,\mathcal{G})$ be diffeological spaces. A map $f:X\rightarrow Y$ is \emph{smooth} if $f$ takes plots of the diffeology on $X$ to plots of the diffeology on $Y$.
\end{defin}

A diffeological group is defined naturally, as a group such that the product and inverse maps are smooth in the sense of diffeology. Since Lie groups (resp. super Lie groups) are manifolds (resp. supermanifolds), they are diffeological groups. 

\begin{defin}
 A difeological space $(S,F)$ is \emph{lattice type} (or \emph{L-type}) when given any two plots $f:M_1 
 \rightarrow S$, $g:M_2 \rightarrow S$ 
 at a point $f(x) = g(y)$, there exists a third plot $h$ through which the germs of $f$ at $x$ and $g$ 
 at $y$ factor, that is there exists a map $h:N 
 \rightarrow S$ with $f = h\circ \varphi$, $g=h\circ \gamma$ where $\varphi: \tilde{M_1} \rightarrow N
 $ and $\gamma: \tilde{M_2} 
 \rightarrow N$ are plots such that $\tilde{M_i} \subset M_i$, $i=1,2$ are neigborhoods of $x$ and $y$ 
 respectively and $\varphi(x) = \gamma(y)$.
 \end{defin}
 
 All diffeological groups are $L$-type (\cite{Lau}, Lemma $2.37$). The property of being $L$-type allows for the definition of the tangent space at the identity, and a diffeological structure on that space, which gives a diffeological vector space. 
\begin{defin}
A diffeological group $G$ is called a \emph{diffeological Lie group} when $T_{e}G$ is a diffeological vector space such that 
\begin{enumerate}
\item [(DL1)] For any nonzero vector $\alpha \in T_{e}G$, there exists a smooth real-valued map $T:T_{e}G \rightarrow \R$ such that $T(\alpha) \ne 0$:
\item [(DL2)] Every plot of $T_{e}G$ factors smoothly through a smooth linear map of a compact Hausdorff topological vector space into $T_{e}G$.
\end{enumerate}
\end{defin}

Let us apply the above definitions to that of supermanifolds. Using the construction in the paper by Leslie \cite{Les1}, we form a super vector space of super dimension $m,n$. We use this to define the open sets for the  diffeology on super manifolds and super Lie groups. The parametrizations (and hence the plots) can be adapted similarly.

Let $\Gamma$ be the Grassmann algebra over $\R$ generated by the elements $1$ and $\{\xi\}_{i\in I}$, where $I$ is a countably infinite set. In line with the theory of super vector spaces, all $\xi_{i}$ are odd, with degree 1. Let $\mathcal{J}$ be the collection of finite subsets of $I$, ordered by inclusion. Each homogeneous element of $\Gamma$ may be expressed as $x_{K}\xi_{i_1}\xi_{i_{2}}\dots \xi_{i_{n_{K}}}$, $x_{K} \in \R$, where $K \in \cJ$ and $n_{K}$ is the cardinality of $K$. We will use the notation in Rogers \cite{Rog}, in which even homogeneous elements are represented using latin letters and odd homogeneous elements are represented using Greek letters. 

As seen in \cite{Les1}, $\Gamma$ is a super commutative algebra, with $ab = (-1)^{|a||b|}ba$, where $a,b$ are elements of $\Gamma$, with the degree of elements of $\Gamma$ extended by linearity from the degree of homogeneous elements of $\Gamma$. 

Define $V$ to be the $m,n$-dimensional superspace $\Gamma_{0}^{m} \oplus \Gamma_{1}^{n}$, where $\Gamma_{0}$ is the subspace of $\Gamma$ consisting of even elements and $\Gamma_{1}$ is the subspace of $\Gamma$ consisting of odd elements. Thus 
\[
V = \Gamma_{0} \times \Gamma_{0}\times \cdots \times \Gamma_{0} \times \Gamma_{1} \times \Gamma_{1} \cdots \times \Gamma_{1}\quad (m,n)\,\text{times}.
\]

We define a topology on $V$ as follows: define the inductive limit topology on $\Gamma$, that is the finest topology such that all inclusion maps $\Gamma_{K} \hookrightarrow \Gamma$ are continuous, where $\Gamma_{K}$ is the finite dimensional subspace generated by elements $\xi_{i_{1}},\dots,\xi_{i_{n_{K}}}$, $K \in \cJ$. The topologies on $\Gamma_{0}$ and $\Gamma_{1}$ are defined by restricting the inductive limit topology to the even and odd subspaces of $\Gamma$ respectively. Finally the topology on $V$ is defined using the product topology on $\Gamma_{0}^{m}$ and $\Gamma_{1}^{n}$.

For the objects under study, we have modified Iglesias' definition of a diffeological space as follows: instead of using open sets in $n$-dimensional space $\R^{n}$ where $n\ge 1$, we consider open subsets of the spaces $\gG_{0}^{m} \oplus \gG_{1}^{n}$, where $m,n \ge 0$.

We use the following definition of a  supersmooth $(G^{\infty})$ function from \cite{Les2}:

\begin{defin}
Let $V$ and $W$ be topological graded modules over $\gG_{0}$. A continuous map $f : V \times \cdots \times V \rightarrow W$ is said to be an $n$-multimorphism if 
\begin{enumerate}
\item[(M1)] $f$ is $n$-multilinear with respect to the ground field $\R$,
\item[(M2)] $f(v_{1}, \dots v_{i}\gamma,v_{i+1}, \dots v_{n}) = f(v_{1}, \dots, v_{i}, \gamma v_{i+1}, \dots, v_{n})$, where $v_{i} \in V, 1\le i \le n$ and $\gamma \in \gG_{0}$,
\item[(M3)] $f(v_{1}\dots v_{n}\gamma) = f(v_{1},\dots, v_{n})\gamma$, $v_{i}\in V$, $1 \le i \le n$, $\gamma \in \gG_{0}$.
\end{enumerate}
\end{defin}

\begin{defin}
Let $V$ and $W$ be topological graded modules over $\gG_{0}$ and let $U\subseteq V$ be open in $V$. A function $f: U \rightarrow W$ will be called supersmoooth or $G^{\infty}$ when for every $n\ge 1$, there exist continuous maps which are regular $k$-multimorphisms in the k terminal variables for each fixed $x \in U$, denoted by $D^{k}f(x;\cdots): U \times V \times \cdots \times V \rightarrow W$, where $k \le n$, such that the map
\[
F_{k}(h):=f(x+h)-f(x)-Df(x;h)-\frac{1}{2!}D^{2}f(x;h,h) - \cdots - \frac{1}{k!}D^{k}f(x;h,\dots h), 1\le k \le n
\] 
satisfies the property that

\[
G_{k}(t,h) := \left\{\begin{array}{cc}
\frac{F_{k}(th)}{t^{k}}, & t \ne 0 \\
0, & t = 0 \end{array}\right.
\]
is continuous at $(0,h)$ in $V$. 
\end{defin} 



Now, let $M$ be a supermanifold modeled on $V = \gG_{0}^{m} \oplus \gG_{1}^{n}$, where $m$ and $n$ are fixed. Thus the charts are $G^{\infty}$ maps from open subsets $U$ of $V$ into $M$. Then $M$ is automatically a diffeological space, with the diffeology generated by the $(m,n)$-charts on $M$. This diffeology is by definition the smallest diffeology containing all such charts as plots.

\subsection{The category of Diffeological Spaces}
Let us denote the category of diffeological spaces by $\fD$. By definition, $\fD$ 
contains the category of smooth manifolds (and supermanifolds) as a full subcategory. 
It also has several useful properties outlined below, as shown by Iglesias \cite{Igl}. 

\begin{prop}
$\fD$ is closed under coproducts. For a collection $\{X_{i}|i\in I\}$ of 
diffeological spaces, the coproduct of the collection is called the \emph{sum} of the diffeological spaces $X_{i}$ and is denoted $\displaystyle \sqcup_{i\in I}X_{i}$ (Iglesias \cite{Igl}, 1.39). 
\end{prop}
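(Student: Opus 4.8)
The plan is to realize the coproduct concretely on the set-theoretic disjoint union and to equip that set with the smallest diffeology making all canonical injections smooth, after which the universal property is essentially forced. First I would take the underlying set of $\sqcup_{i\in I}X_{i}$ to be the disjoint union of the underlying sets of the $X_{i}$, with canonical injections $\iota_{i}:X_{i}\to \sqcup_{i\in I}X_{i}$. I would then declare a parametrization $p:U\to \sqcup_{i\in I}X_{i}$ to be a plot exactly when it is \emph{locally sorted}: for every $u\in U$ there are a neighborhood $V$ of $u$ and an index $i\in I$ with $p|_{V}=\iota_{i}\circ q$ for some plot $q:V\to X_{i}$ of $X_{i}$. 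Equivalently, this is the diffeology induced (in the sense of the induced diffeology defined above) by the collection $\{\iota_{i}\circ q : i\in I,\ q \text{ a plot of } X_{i}\}$; the two descriptions coincide because any diffeology containing this collection must, by axiom (D2), already contain every locally sorted parametrization.

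Next I would confirm that the locally sorted parametrizations satisfy the three diffeology axioms. Axiom (D1) holds because a constant parametrization has image in a single $X_{i}$ and the corresponding constant map is a plot of $X_{i}$, so it is locally of the required form. Axiom (D2) is immediate, since the defining condition is stated locally and is therefore itself a sheaf condition. For axiom (D3), given a locally sorted $p$ and a smooth map $f:V'\to U$, around each $v\in V'$ I would choose a neighborhood $W$ of $f(v)$ on which $p|_{W}=\iota_{i}\circ q$; then $f^{-1}(W)$ is a neighborhood of $v$ on which $p\circ f=\iota_{i}\circ(q\circ f)$, and $q\circ f$ is a plot of $X_{i}$ by (D3) applied in $X_{i}$.

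Finally I would verify the universal property. The injections $\iota_{i}$ are smooth by construction, since $\iota_{i}\circ q$ is a plot of the coproduct whenever $q$ is a plot of $X_{i}$. Given any diffeological space $Y$ together with smooth maps $f_{i}:X_{i}\to Y$, the disjoint union determines a unique map of sets $f:\sqcup_{i\in I}X_{i}\to Y$ satisfying $f\circ\iota_{i}=f_{i}$; to see that $f$ is smooth, I would take an arbitrary plot $p$ of the coproduct, write it locally as $\iota_{i}\circ q$, and note that $f\circ p$ then coincides locally with $f_{i}\circ q$, which is a plot of $Y$ because $f_{i}$ is smooth. Axiom (D2) for $Y$ upgrades this local statement to the assertion that $f\circ p$ is a plot, so $f$ is smooth; uniqueness of $f$ is forced already at the level of underlying sets.

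I expect the only delicate point to be the verification that the locally sorted parametrizations genuinely assemble into a diffeology rather than merely into a presheaf of parametrizations, and in particular that this collection agrees with the diffeology induced by the pushed-forward plots. Once axiom (D2) is seen to hold by construction, the remaining checks are routine and the universal property follows formally, establishing that $\fD$ is closed under coproducts.
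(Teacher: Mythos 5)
Your construction is correct and complete: the locally sorted parametrizations do form a diffeology on the disjoint union, they coincide with the diffeology generated by the pushed-forward plots, and the universal property follows exactly as you argue. The paper itself offers no proof here --- it simply cites Iglesias--Zemmour (1.39) --- and your argument is precisely the standard construction of the sum diffeology given in that reference, so there is nothing to reconcile.
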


\begin{prop}
$\fD$ is closed under products. Just as in the above result, given an arbitrary collection $\{X_{i}|i\in I\}$, the product object is denoted $\displaystyle \prod_{i\in I}X_{i}$ and can be given a canonical diffeology (Iglesias \cite{Igl}, 1.55).
\end{prop}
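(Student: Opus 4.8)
The plan is to exhibit the categorical product directly, by equipping the Cartesian product set with the \emph{initial diffeology} determined by the projections, and then to verify the universal property. Write $P = \prod_{i\in I} X_i$ for the underlying product set, with projection maps $\pi_i : P \to X_i$. I would declare a parametrization $p : U \to P$ to be a plot precisely when every composite $\pi_i \circ p : U \to X_i$ is a plot of $X_i$. This is the coarsest diffeology on $P$ for which all the projections are smooth, and it is the only reasonable candidate for a product structure.

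First I would check that this collection $\cD$ of parametrizations is genuinely a diffeology, that is, that it satisfies (D1)--(D3). Each axiom reduces immediately to the corresponding axiom for the factors, because $\pi_i$ commutes with the operations involved: a constant parametrization of $P$ projects to a constant parametrization of each $X_i$, giving (D1); and $\pi_i \circ (p \circ f) = (\pi_i \circ p)\circ f$ is a plot of $X_i$ whenever $f$ is a smooth map and $p$ is a plot of $P$, giving (D3). The same verification works verbatim when the model open sets are taken in $\gG_0^m \oplus \gG_1^n$ rather than in $\R^n$, since the axioms are structurally identical in that setting.

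Next I would establish the universal property. By the very definition of $\cD$, each projection $\pi_i$ sends plots to plots and is therefore smooth. Given any diffeological space $Y$ together with smooth maps $f_i : Y \to X_i$, the unique set-theoretic map $f : Y \to P$ satisfying $\pi_i \circ f = f_i$ for all $i$ is $y \mapsto (f_i(y))_{i\in I}$. To see that $f$ is smooth, I would take an arbitrary plot $q$ of $Y$ and observe that $\pi_i \circ (f\circ q) = f_i \circ q$ is a plot of $X_i$ because $f_i$ is smooth; hence $f \circ q$ is a plot of $P$ by the definition of $\cD$. This shows that $(P,\cD)$ together with the family $\{\pi_i\}_{i\in I}$ is the product of the $X_i$ in $\fD$, and that it carries a canonical diffeology.

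I do not expect a serious obstacle: the argument is a formal verification that the initial diffeology realizes the categorical product, and essentially every step is a one-line reduction to the axioms in the factors. The only point requiring minor care is the locality axiom (D2). Here one must note that if $p : U \to P$ restricts to a plot on a neighborhood $V$ of each point of $U$, then $\pi_i \circ p$ restricts on the \emph{same} neighborhood $V$ to the plot $\pi_i \circ (p|_V)$, so that applying (D2) in each $X_i$ recovers $\pi_i \circ p$ as a plot for every $i$ simultaneously; the local data in $P$ transfers coherently to each factor with no compatibility issues across the index set $I$, and (D2) for $P$ follows.
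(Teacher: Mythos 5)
Your construction is correct and is exactly the product diffeology of Iglesias (\cite{Igl}, 1.55) that the paper cites in lieu of a proof: the coarsest diffeology for which all projections are smooth, verified against (D1)--(D3) and the universal property. Since the paper supplies no argument of its own, your proposal simply fills in the standard details behind the citation, and it does so correctly, including the only delicate point (the locality axiom (D2)).
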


\begin{prop}
$\fD$ is a symmetric monoidal category. 
\end{prop}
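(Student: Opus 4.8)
The plan is to exhibit $\fD$ as a \emph{cartesian} monoidal category, taking the tensor product to be the categorical product $\times$ equipped with its canonical product diffeology (whose existence was recorded in the preceding proposition) and the unit object to be the one-point diffeological space $I = \{\ast\}$. Once the product is recognized as the genuine categorical product and $I$ as a terminal object, the symmetric monoidal structure is essentially forced, so the remaining work reduces to diffeology-specific bookkeeping: checking that the canonical comparison maps are smooth in both directions.

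First I would verify that $I = \{\ast\}$ is terminal in $\fD$. For any diffeological space $X$ the unique set map $X \to \{\ast\}$ carries every plot of $X$ to the constant parametrization at $\ast$, which lies in the diffeology of $\{\ast\}$ by axiom (D1); hence this map is smooth, and it is plainly the only map of sets into a singleton.

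Next I would produce the structure isomorphisms — the associator $\alpha_{X,Y,Z} \colon (X\times Y)\times Z \to X\times(Y\times Z)$, the unitors $\lambda_X \colon I\times X \to X$ and $\rho_X \colon X\times I \to X$, and the braiding $\sigma_{X,Y}\colon X\times Y \to Y\times X$ — as the canonical bijections of underlying sets supplied by the universal property of the product. The central diffeological point is that a set map into a product $\prod_{i} X_i$ is smooth precisely when each composite $\mathrm{pr}_i \circ f$ with a projection is smooth; this characterization of the product diffeology shows that each of these bijections, together with its inverse, is smooth, so each is a diffeomorphism. Naturality of these families in $X$, $Y$, $Z$ is checked by the same coordinatewise criterion.

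Finally, the coherence conditions — the pentagon identity for $\alpha$, the triangle identity relating $\alpha$ to $\lambda$ and $\rho$, the hexagon identities for $\sigma$, and the involutivity $\sigma_{Y,X}\circ\sigma_{X,Y}=\mathrm{id}$ — hold on the nose: in each diagram the two composites are morphisms into a product, and they agree after postcomposition with every projection, so by the uniqueness clause of the universal property they coincide. I expect no genuine obstacle; the only step requiring care is to confirm at each stage that the canonical set-theoretic bijections are smooth in both directions, which is exactly the content of the universal property of the product diffeology recalled above.
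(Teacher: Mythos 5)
Your proposal is correct and takes essentially the same route as the paper: both endow $\fD$ with the cartesian monoidal structure, using the product diffeology for the tensor, a one-point space as the unit, and the canonical swap $(x,y)\mapsto(y,x)$ as the symmetry. Your write-up is simply a more careful elaboration — verifying terminality of the point, smoothness of the structure maps via the universal property of the product diffeology, and the coherence diagrams — of what the paper asserts in compressed form ("the coherence axioms are clearly satisfied").
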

\begin{proof}
The product map is derived from the definition of the product object. In addition, since $\fD$ is essentially a concrete category, associativity of the product map holds. We can take any one-point set as a unit object, since all one-point sets are unique up to isomorphism of diffeological spaces (all plots are constant). The coherence axioms are clearly satisfied. The symmetry of $\fD$ is given by the canonical map $\gamma : X \times Y \rightarrow Y \times X, (x,y)\mapsto (y,x)$. 
\end{proof}

\begin{prop}
$\fD$ is complete with respect to (all small) limits and colimits. 
\end{prop}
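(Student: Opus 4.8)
The plan is to reduce both halves of the statement to structures already in hand. Recall the standard categorical fact that a category with all small products and with equalizers of all parallel pairs is complete---an arbitrary small limit is realized as an equalizer of two maps between products (the ``limit as equalizer of products'' presentation)---and dually that a category with all small coproducts and all coequalizers is cocomplete. The two preceding propositions already furnish arbitrary products $\prod_{i\in I}X_i$ and coproducts $\sqcup_{i\in I}X_i$ in $\fD$, so it remains only to build equalizers and coequalizers and to check their universal properties.

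For the equalizer of a parallel pair $f,g\colon(X,\cD_X)\to(Y,\cD_Y)$, I would take $E=\{x\in X: f(x)=g(x)\}$ with the \emph{subspace} (initial) diffeology: a parametrization $p\colon U\to E$ is a plot exactly when $\iota\circ p$ is a plot of $X$, where $\iota\colon E\hookrightarrow X$ is the inclusion. Axioms (D1)--(D3) for this collection are inherited verbatim from $\cD_X$, and $\iota$ is smooth with $f\iota=g\iota$. Given any smooth $h\colon Z\to X$ with $fh=gh$, the values of $h$ lie in $E$, so $h=\iota\bar h$ for a unique set map $\bar h\colon Z\to E$; and $\bar h$ is smooth because for every plot $p$ of $Z$ the composite $\iota\bar h p=hp$ is a plot of $X$, which is precisely the condition for $\bar h p$ to be a plot of $E$. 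This realizes the equalizer.

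For the coequalizer of the same pair, I would form $Q=Y/\!\!\sim$, with $\sim$ the smallest equivalence relation satisfying $f(x)\sim g(x)$ for all $x$, and give it the \emph{quotient} (final) diffeology along the projection $\pi\colon Y\to Q$: a parametrization $p\colon U\to Q$ is a plot iff every $u\in U$ has a neighborhood $V$ on which $p|_V=\pi\circ q$ for some plot $q$ of $Y$. The local clause is forced by the sheaf axiom (D2), and (D1)--(D3) then hold, with $\pi$ smooth and $\pi f=\pi g$. If $h\colon Y\to Z$ is smooth with $hf=hg$, then $h$ is constant on $\sim$-classes, so $h=\phi\pi$ for a unique set map $\phi\colon Q\to Z$, and $\phi$ is smooth since locally $\phi p=\phi\pi q=hq$ is a plot of $Z$ and (D2) for $Z$ assembles these into a plot. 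This realizes the coequalizer.

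The coequalizer is the step I expect to be delicate. Unlike the subspace diffeology, the correct diffeology on $Q$ is the one \emph{generated} by $\{\pi\circ q: q\in\cD_Y\}$, and one must verify that a parametrization belongs to it exactly when it is \emph{locally} of the form $\pi\circ q$; otherwise the smoothness test for maps out of $Q$ fails. A cleaner, more structural route that sidesteps this case analysis is to observe that the forgetful functor $\fD\to\mathbf{Set}$ is \emph{topological}: every source $(f_i\colon X\to A_i)_{i}$ from a set $X$ to diffeological spaces admits a unique initial lift, namely the diffeology whose plots are the parametrizations $p$ with $f_i\circ p$ a plot of $A_i$ for all $i$ (that this is a diffeology is immediate from Iglesias' axioms). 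A topological functor over $\mathbf{Set}$ is automatically both complete and cocomplete, with all small limits and colimits created on underlying sets. Either route establishes the proposition.
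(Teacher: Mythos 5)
Your proposal is correct and follows the same route as the paper: reduce completeness to small products plus equalizers (Mac Lane's Corollary V.2.2) and cocompleteness to coproducts plus coequalizers, realizing the equalizer as a subspace and the coequalizer as a quotient by the generated equivalence relation. You supply the details the paper leaves implicit --- the subspace and quotient diffeologies and the verification of the universal properties, the coequalizer being the genuinely delicate point --- and your closing observation that the forgetful functor $\fD\to\mathbf{Set}$ is topological is a valid alternative argument that the paper does not use.
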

\begin{proof}
From Mac Lane (\cite{Mac}, Corollary V.2.2), we know that 
if a category $\cC$ has equalizers of all pairs of arrows and all small products, then $\cC$ is small-complete. In $\fD$, the equalizer of pairs of smooth maps $f,g: X \rightarrow Y$ is defined to be the subspace $Z$ of $X$ such that $f(x) = g(x)$ for $x\in Z$. The universal arrow is the canonical inclusion map. The coequalizer is defined for pairs $f,g : X \rightarrow Y$ by considering the smallest equivalence relation $E \subset Y \times Y$ containing all pairs $(f(x), g(x))$, for $x \in X$. The coequalizer is the quotient space $Z = Y/E$, with the universal arrow given by the canonical projection map. From the existence of equalizers, coequalizers, coproducts and products, we conclude that $\fD$ is complete. 
\end{proof}

\begin{prop}
$\fD$ is Cartesian closed. 
\end{prop}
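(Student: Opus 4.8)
The plan is to construct the exponential objects explicitly and then verify the currying adjunction. The two preceding propositions already give that $\fD$ has all finite products and a terminal object (any one-point set), so what remains is to produce, for each pair of diffeological spaces $Y,Z$, an internal hom $Z^{Y}$ together with a bijection
\[
\mathrm{Hom}_{\fD}(X\times Y,\,Z)\;\cong\;\mathrm{Hom}_{\fD}(X,\,Z^{Y})
\]
natural in $X$ and $Z$. Following Iglesias \cite{Igl}, I would take the underlying set of $Z^{Y}$ to be $C^{\infty}(Y,Z)$, the set of smooth maps $Y\to Z$, endowed with the \emph{functional diffeology}: a parametrization $p\colon U\to C^{\infty}(Y,Z)$ is declared a plot exactly when, for every plot $q\colon V\to Y$ of $Y$, the evaluated map
\[
U\times V\longrightarrow Z,\qquad (u,v)\longmapsto p(u)\bigl(q(v)\bigr)
\]
is a plot of $Z$. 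Checking the axioms (D1)--(D3) for this collection is routine: (D1) holds since constant $p$ give constant evaluations; (D3) holds because precomposing $p$ with a smooth $f\colon W\to U$ precomposes the evaluated map with $f\times\mathrm{id}_{V}$; and (D2) follows by applying the sheaf axiom of $Z$ pointwise on $U$.

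First I would treat the currying direction, which is the clean one. Given a smooth $f\colon X\times Y\to Z$, define its transpose $\widehat{f}\colon X\to C^{\infty}(Y,Z)$ by $\widehat{f}(x)(y)=f(x,y)$. For fixed $x$, the map $y\mapsto f(x,y)$ is the composite of $f$ with $y\mapsto(x,y)$, and since the latter has a constant first component (a plot by (D1)) and smooth second component, $\widehat{f}(x)$ indeed lies in $C^{\infty}(Y,Z)$. To see that $\widehat{f}$ is smooth into the functional diffeology, take a plot $a\colon U\to X$ and a plot $q\colon V\to Y$; then $(u,v)\mapsto f(a(u),q(v))$ factors as $f$ composed with the product plot $a\times q$ of $X\times Y$, hence is a plot of $Z$, which is precisely the defining condition for $\widehat{f}\circ a$ to be a plot of $Z^{Y}$.

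I expect the main obstacle to be the uncurrying direction, where the specific shape of the functional diffeology must be exploited through a diagonalization. Given a smooth $g\colon X\to C^{\infty}(Y,Z)$ and an arbitrary plot $r\colon W\to X\times Y$, the product diffeology lets me write $r=(a,b)$ with $a=\pi_{X}\circ r$ a plot of $X$ and $b=\pi_{Y}\circ r$ a plot of $Y$. Then $g\circ a$ is a plot of the functional diffeology, so applying the defining quantification to the plot $b$ shows that $(w,w')\mapsto (g\circ a)(w)\bigl(b(w')\bigr)$ is a plot of $Z$ on $W\times W$; precomposing with the smooth diagonal $W\to W\times W$ and invoking (D3) yields that $w\mapsto g(a(w))(b(w))$ is a plot of $Z$, i.e. the uncurried map $(x,y)\mapsto g(x)(y)$ pulls back every plot of $X\times Y$ to a plot of $Z$ and is therefore smooth. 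This diagonalization is the one genuinely delicate maneuver; that the two transpose assignments are mutually inverse and natural in $X$ and $Z$ then follows formally, completing the verification that $\fD$ is Cartesian closed.
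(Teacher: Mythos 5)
Your proposal is correct and takes essentially the same route as the paper, which simply writes down the currying map $\phi\colon \fD(X\times Y,Z)\to\fD(X,\fD(Y,Z))$ and asserts it is a bijection realizing the adjunction $-\times Y\dashv\fD(Y,-)$ for the functional diffeology; you supply all the details the paper omits, including the diagonal maneuver needed for the uncurrying direction. (One small imprecision: in your check of (D1), for constant $p\equiv h$ the evaluated map $(u,v)\mapsto h(q(v))$ is not constant but factors as $h\circ q\circ\mathrm{pr}_{V}$, hence is a plot by (D3); the axiom still holds.)
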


\begin{proof}
The map $\phi :\fD(X\times Y, Z) \rightarrow \fD(X, \fD(Y,Z))$ given by $(\phi(f)(x))(y) = f(x,y)$ for $f: X\times Y \rightarrow Z$ is a bijection, and shows that the functor $\fD(Y,-): \fD \rightarrow \fD$ is the right adjoint to the functor $- \times Y:\fD \rightarrow \fD$.
\end{proof}

\begin{rem}
The properties of $\fD$ as a category make it suitable as an enriching category. This gives us the ability to use the results of Kelly \cite{Kel} on indexed limits, indexed colimits and Kan extensions in the case of enriched categories. These ideas are discussed in the following section. 
\end{rem}

Now we can define the (super) Lie groups we make use of in the paper. 
\subsection{The super Lie groups $\dginf(M)$ and $\sinf(M)$}

\begin{defin}
$\dginf(M)$ is defined to be the group of super diffeomorphisms of $M$, where $M$ is a supermanifold. $\dginf(M)$ can be made into a diffeological space by applying the \emph{function space diffeology:} a map $f: U \subseteq \dginf(M)$ is a plot if the induced map $F : U \times M \rightarrow M$ given by $F(u,m) = (f(u))(m)$ is smooth for all $u \in U$. The smallest diffeology generated by such plots is called the function space diffeology on $\dginf(M)$. 

\end{defin}

Finally, let us consider the space $\sinf(M)$ of super symplectic diffeomorphisms of $M$, defined as follows.

In \cite{Les2}, the following vector bundles on $M$ were defined: 

\begin{itemize}

\item $T_{\Gamma}M$, which is the associated vector bundle with fiber $\Gamma \otimes_{\Gamma_{0}} V$, where $V = \gG_{0}^{m} \oplus \gG_{1}^{n}$ as defined earlier. 

\item $T_{\Gamma}^{\ast} M := (T_{\Gamma}M)^{\ast}$, which is the associated vector bundle with fiber the dual of the space $\Gamma \otimes_{\Gamma_{0}} V$, $Hom(\Gamma \otimes_{\Gamma_{0}} V, \Gamma)$. 
\end{itemize}

Define the bundle $\pi :\wedge^{p} T^{\ast}_{\Gamma}M
\otimes_{\Gamma_{0}}\otimes^{q}_{\Gamma_{0}}T_{\Gamma}M \rightarrow M$ \cite{Les2}. Let $\omega$ be a non zero section of the above bundle  and define the Lie algebra $\fs$ to be the set $\{ X \in \fd\,|\, L_{x}(w) = 0\}$. Suppose further that $\fs$ is a Lie subalgebra of $\fd$, where $\fd$ is the Lie algebra of the group of superdiffeomorphisms of $M$. Leslie \cite{Les2} has shown that $\fs$ is a strongly integrable diffeological Lie subalgebra of $\fd$, which indicates that there exists a regular diffeological Lie group such that the exponential map is smooth. We shall call this Lie group $\sinf(M)$. $\sinf(M)$ is by definition a subgroup of $\dginf(M)$.

\section{Induced Representations are Kan Extensions}

Let $\cA$, $\cC$ and $\cM$ be small categories. Recall that (\cite{Mac}, Chapter $X$) a \emph{right Kan extension of a functor $T: \cM \rightarrow \cA$ along $K:\cM \rightarrow \cC$} is a functor $R = Ran_{K}T : \mc{C} \rightarrow \mc{A}$ and a natural transformation $\mu: RK \dot{\longrightarrow} T$ such that for any other right extension $S$ of $T$ along $K$ and $\sigma: SK \dot{\longrightarrow} T$, there exists a unique natural transformation $\varepsilon: S \dot{\longrightarrow} R$ such that $\sigma = \mu \circ \varepsilon K:
SK \dot{\longrightarrow} RK \dot{\longrightarrow} T$. This gives the following bijection of sets:
\begin{equation}\label{adjunction1}
\mc{A^{C}}(S, Ran_{K}T) \simeq \mc{A^{M}} (SK, T)
\end{equation}
This bijection is an adjunction between the functor categories $\mc{A^{C}}$ and $\mc{A^{M}}$. By the universal property, $Ran_{K}T$ is unique up to natural isomorphism.

Dually, a \emph{left Kan extension of $T$ along $K$} is a functor $L = Lan_{K}T: \mc{C} \dot{\rightarrow} \mc{A}$ and a natural transformation $\mu : T \dot{\longrightarrow} LK$ such that for any other left extension $S$ of $T$ along $K$ and $\sigma: T \dot{\longrightarrow} SK$, there exists a unique natural transformation $\varepsilon: L \dot{\longrightarrow} S$ such that $\sigma= \varepsilon K \circ \mu : T \dot{\longrightarrow} LK \dot{\longrightarrow} SK$. We thus have the following adjunction
\begin{equation}\label{adjunction2}
\mc{A^{C}}(Lan_{K}T,S) \simeq \mc{A^{M}}(T, SK).
\end{equation}

If $\mc{A}$ is a small complete category, then (\cite{Mac}, Chapter $X$) the Kan extensions can be expressed as pointwise limits: for the functors $T: \mc{M} \rightarrow \mc{A}$ and 
$K: \mc{M} \rightarrow \mc{C}$, the left and right Kan extensions are functors $L,R: \mc{C} \rightarrow \mc{A}$, where for each object $c$ of $\mc{C}$, 

\[
	\begin{aligned}
	Lc &= \colim \left[(K \downarrow c) \xlongrightarrow{Q} \mc{M} \xlongrightarrow{T} \mc{A}  \right] = \colim_{f}Tm, \qquad f \text{ in } (K\downarrow c) \\
	Rc &= \plim \left[(c \downarrow K) \xlongrightarrow{Q} \mc{M} \xlongrightarrow{T} \mc{A}  \right] = \plim_{f}Tm, \qquad f \text{ in } (c\downarrow K)\\
	\end{aligned}
\]

where $(K\downarrow c)$ and $(c\downarrow K)$ are the comma (or slice) categories of objects over and under $c$, respectively. Now, we can rewrite $(K\downarrow c)$ and $(c\downarrow K)$ as follows: consider the functor $F = \mc{C}(c, K-): \mc{M} \rightarrow \text{Set}$, which assigns to each object of $\mc{M}$ the set of arrows of the form $f: c \rightarrow Km$, where $m$ is an object of $\mc{M}$. The category of \emph{elements of $F$} has objects pairs $<m, f>$, where $m$ is an object of $\mc{M}$ and $f \in \mc{C}(c, Km)$. If $\tau: m \rightarrow m^{\prime}$ is an arrow in $\mc{M}$, then we have an induced arrow $\tau^{\ast}: <m,f> \rightarrow <m^{\prime}, f^{\prime}>$, given by the commutative diagram

\begin{displaymath}
\xymatrix{&c\ar[dl]_{f}\ar[dr]^{f^{\prime}} \\
Km\ar[rr]^{K\tau}& &Km^{\prime}
}
\end{displaymath}

Therefore the slice category $(c\downarrow K)$ is precisely the category of elements of $F = \mc{C}(c, K-)$. Similarly, $(K \downarrow c)$ is the category of elements of $\mc{C}(K-, c)$.

Let us relate the above definitions to the problem of induced representations, using the categories
$H$, $G$ and $\mc{V} = Vect_{\K}$ which were defined in the Introduction, in place of $\mc{A}$, $\mc{C}$ and $\mc{M}$ above. We notice immediately that the induced representation can be interpreted as a Kan extension of the functor $T: H \rightarrow \mc{V}$. The left and right Kan 
extensions of $T $ lead in principle to two formulations of the induced
representation of $T$, where it exists. In the finite case, the two induced representations, denoted $Ind$ and $coInd$ exist and are isomorphic. 

We will focus on the left Kan extension in our case. We will prove that the left Kan extension does exist, hence prove  Theorem \ref{mainthm}, using the tools provided by diffeology. In the next section, we discuss indexed limits, and illustrate how the Kan extension can be expressed as an indexed limit.

\section{Indexed Limits in Diffeological Categories}
The properties of the category $\fD$ of diffeological spaces enable us to define an \emph{enriched category} with the morphisms between objects having extra structure:
\begin{defin} (\cite{Twu} Definition 5.1.1)
 A \emph{Diffeological Category} $\mc{D}$ is a locally small category such that for any objects $x,y$ in $\mc{D}$, the hom set $\mc{D}(x,y)$ is a 
 diffeological space which satisfies the following axiom: for all objects $x,y$ and $z$ in $\mc{D}$ and elements $g$ in $\mc{D}(x,y)$ and $f$ in $\mc{D}(y,z)$, the composition $f\circ g$ in $\mc{D}(x,z)$ defines a smooth map:
 \begin{equation}\label{diffcat}
 \begin{aligned}
 \mc{D}(y,z) \times \mc{D}(x,y) &\rightarrow \mc{D}(x,z)\\
 f \times g &\mapsto f \circ g \\
 \end{aligned}
 \end{equation}
 where $\mc{D}(y,z) \times \mc{D}(x,y)$ has the product diffeology. 
 \end{defin}
 
Thus a diffeological category is an enriched category where arrows between objects have the structure of a diffeological space.
Functors defined between diffeological categories are called \emph{smooth} when they induce maps between diffeological spaces of arrows. Smooth natural transformations, smooth adjunctions and smooth universal arrows are defined similarly. 

We note that the concept of a diffeological category makes sense, since the category $\fD$ of diffeological spaces is itself a diffeological category: the function space diffeology makes the set of functions between diffeological spaces into a diffeological space itself.

Let $G: \cJ \rightarrow \cC$ be a functor of diffeological categories. We observe, from \cite{Kel} and \cite{Fre} that the limits and colimits of $G$ satisfy the following natural isomorphisms:

\[
\begin{aligned}
\cC(c, \plim G) &\simeq \cC^{\cJ}(\Delta c, G)\\
\cC(\colim G, c) &\simeq \cC^{\cJ}(G, \Delta c)\\ 
\end{aligned}
\]

We can rewrite the cones $\cC^{\cJ}(\Delta c, G)$ and $\cC^{\cJ}(G, \Delta c)$ as
$\fD^{\cJ}(\Delta 1, \cC(c, G-))$ and $\fD^{\cJ^{op}}(\Delta 1,\cC(G-,c))$ respectively, where in the second case, $\Delta 1$ is regarded as a contravariant functor from $\cJ$ to $\cC$. 

The idea of an indexed (or weighted) limit is to replace the functor $\Delta 1$ by an index functor $F: \cJ \rightarrow \cC$.

\begin{defin}\label{indexformulas}
Let $F:\cJ \rightarrow \fD$ and $G:\cJ \rightarrow \cC$ be smooth functors of diffeological categories. Fix an object $c$ of $\mc{C}$. \emph{An $(F,c)$-cylinder over $G$} is a smooth natural transformation $\alpha: F \dot{\rightarrow}\cC(c,G-)$. It can be illustrated with the commutative diagram:
\begin{displaymath}
\xymatrixcolsep{5pc}\xymatrix{Fi \ar[r]^{\alpha_{i}}\ar[d]_{f}& \mc{C}(c,Gi)\ar[d]^{\mc{C}(c,Gf)}\\
Fj\ar[r]_{\alpha_{j}}&\mc{C}(c,Gj)} 
\end{displaymath}
\end{defin}
As discussed above, if $F=\Delta1 : \cJ \rightarrow \fD$, the functor assigning to each object $i \in \cJ$ the unit element $\ast$, the $(\Delta 1, c)$-cylinder can be identified with a cone $\tau: \Delta c \dot{\rightarrow} G$.

We define the \emph{limit of $G$ indexed by $F$} to be the object $\{F,G\}$ in $\cC$ such that every other $(F,c)$-cylinder
over $G$ factors uniquely though $\{F,G\}$. The definition is illustrated by the following commutative diagram:
\begin{displaymath}
\xymatrixcolsep{5pc}\xymatrix{Fi \ar[r]^{\alpha_{i}}\ar@{}[d]_{=}& \cC(c,Gi)\\
Fi\ar[r]^{\mu_{i}}&\cC(\{F,G\},Gi)\ar@{-->}[u]_{\cC(t,1)}} 
\end{displaymath}
where $t: c \rightarrow \{F,G\}$ is the unique arrow. Hence $\alpha_{i} = \cC(t,1)\mu_{i}$.
This gives the isomorphism (in our context a diffeomorphism of diffeological spaces)
\begin{equation}\label{eq:limit}
\cC(c, \{F,G\}) \simeq \fD^{\cJ}(F, \cC(c,G-))
\end{equation}
The universal arrow is $\mu: F \dot{\rightarrow} \cC(\{F,G\},G-)$. 

\begin{equation}\label{canlimit}
\{\Delta1, G\} \simeq  \plim G,
\end{equation}
the usual limit of $G: \cJ \rightarrow \cC$.

\begin{defin}
Let $F:\cJ^{op} \rightarrow \fD$ and $G: \cJ \rightarrow \cC$ be smooth functors of diffeological categories. \emph{An $(F,c)$-cylinder under G} is a natural transformation of smooth functors $\alpha: F \dot{\rightarrow} \cC(G-,c)$. Again, in the case $F=\Delta 1$, $\alpha: \Delta1 \dot{\rightarrow} \mc{C}(G-,c)$ can be identified with a cone from the base $G$ to $c$. The \emph{colimit of $G$ indexed by $F$} is the unique (up to isomorphism) object of $\mc{C}$ such that every $(F,c)$- cylinder under $G$ factors uniquely through the universal cylinder $\mu: F\dot{\rightarrow} \cC(G-,F*G)$. The commutative diagram illustrating the definition is given below.

\begin{displaymath}
\xymatrixcolsep{5pc}\xymatrix{Fi \ar[r]^{\alpha_{i}}\ar@{}[d]_{=}& \cC(Gi,c)\\
Fi\ar[r]^{\mu_{i}} &\cC(Gi,F*G)\ar@{-->}[u]_{\cC(1,t)}} 
\end{displaymath}

where $t: F\ast G \rightarrow c$ is the unique map. Hence $\alpha_{i} = \cC(1,t)\mu_{i}$.
\end{defin}
We have the following diffeomorphism of diffeological spaces:
\begin{equation}\label{eq:colimit}
\cC(F*G,c) \simeq \fD^{\cJ^{op}}(F,\cC(G-,c))
\end{equation}
As before, 
\begin{equation}\label{cancolimit}
\Delta 1*G \simeq \colim\,G.
\end{equation}

When we have the smooth functors $F,G: \mc{J} \rightarrow \fD$, the indexed limits and colimits take a particularly simple form:

In some cases, we can express indexed limits as ordinary limits. Consider a smooth functor $F: \mc{C} \rightarrow \fD$. F can be expressed as a colimit of representable functors (\cite{Mac}, Theorem $III.7.1$):

\begin{equation}\label{repcolim}
F \simeq Colim(el F \xlongrightarrow{Q} \mc{C}\xlongrightarrow{M} \fD^{\mc{C}} \xlongrightarrow{y^{-1}} \fD),
\end{equation}

where $Q$ is the projection functor $(c,x) \mapsto c$, $M$ is the functor which takes objects $c$ in $\mc{C}$ to hom-objects $\mc{C}(c,-)$ in $\fD$, and $y^{-1}$ is induced by the Yoneda lemma. We can extend this result as follows:

\begin{prop}\label{natend} Let $F,G : \mc{J} \rightarrow \fD$ be smooth functors. Then $\{
F,G\} \simeq Nat(F,G)$, the diffeological space of natural transformations between the functors $F$ and $G$. 
\end{prop}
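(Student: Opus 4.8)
The plan is to verify that the diffeological space $Nat(F,G)$ of natural transformations satisfies the universal property that defines the indexed limit $\{F,G\}$, and then to conclude by uniqueness of representing objects. Since $\fD$ is complete, $Nat(F,G)$ is a bona fide object of $\fD$: it is the hom-object $\fD^{\cJ}(F,G)$ of the functor category, realized concretely as the equalizer of the two canonical maps
\[
\prod_{j}\fD(Fj,Gj)\;\rightrightarrows\;\prod_{f\colon i\to j}\fD(Fi,Gj)
\]
sending a family $(\beta_j)$ to $(Gf\circ\beta_i)$ and to $(\beta_j\circ Ff)$ respectively, and so it carries the canonical subspace diffeology. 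Taking $\cC=\fD$ in the defining isomorphism \eqref{eq:limit}, the universal property to be matched reads $\fD(c,\{F,G\})\simeq\fD^{\cJ}(F,\fD(c,G-))$. It therefore suffices to construct a diffeomorphism $\fD(c,Nat(F,G))\simeq\fD^{\cJ}(F,\fD(c,G-))$, natural in $c$, after which Yoneda forces $\{F,G\}\simeq Nat(F,G)$.

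First I would unwind the two sides. A smooth map $\psi\colon c\to Nat(F,G)$ is precisely a smooth $c$-indexed family of natural transformations $F\Rightarrow G$; componentwise it amounts to smooth maps $c\times Fj\to Gj$, one for each object $j$. Applying the Cartesian-closed transposition $\phi$ of the proof that $\fD$ is Cartesian closed to each component yields smooth maps $\alpha_j\colon Fj\to\fD(c,Gj)$ defined by $\alpha_j(v)(x)=\psi(x)_j(v)$. Running $\phi^{-1}$ in the opposite direction sends a cylinder $\alpha\colon F\dot{\rightarrow}\fD(c,G-)$ back to a $c$-family of maps $c\times Fj\to Gj$. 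This is the candidate correspondence $\psi\leftrightarrow\alpha$.

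The step I expect to require the real work is checking that this transposition is compatible with the two flavours of naturality---equivalently, that it restricts correctly to the natural transformations and cylinders. Concretely, for an arrow $f\colon i\to j$ the cylinder condition of Definition \ref{indexformulas}, namely $\fD(c,Gf)\circ\alpha_i=\alpha_j\circ Ff$, must be shown to hold if and only if every $\psi(x)$ is natural at $f$. I would verify this by evaluating both sides on $v\in Fi$ and $x\in c$: the left-hand side returns $Gf(\psi(x)_i(v))$ and the right-hand side returns $\psi(x)_j(Ff(v))$, so the cylinder square for $\alpha$ is exactly the naturality square $Gf\circ\psi(x)_i=\psi(x)_j\circ Ff$ of $\psi(x)$, holding simultaneously for all $x$. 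This is the point at which the equalizer description of $Nat(F,G)$ is matched with the cylinder diagram, and it is the heart of the argument; the bookkeeping rests on $\fD(c,Gf)$ being the transpose of postcomposition with $Gf$.

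It remains to upgrade the set-level bijection to a diffeomorphism and to check naturality in $c$. Both directions of the correspondence are built solely from $\phi$, which the Cartesian-closed structure supplies as an isomorphism in $\fD$, and the products, equalizers and function-space diffeologies in play are the canonical ones, so smoothness is preserved upon passing to the subspaces of natural transformations and cylinders; naturality in $c$ follows from naturality of $\phi$ in its first argument. Combining with \eqref{eq:limit} gives $\fD(c,\{F,G\})\simeq\fD(c,Nat(F,G))$ naturally in $c$, whence $\{F,G\}\simeq Nat(F,G)$ by Yoneda. As an alternative route that directly extends \eqref{repcolim}, one could instead write $F$ as a colimit of representables, use that $\{-,G\}$ converts colimits in the weight into limits together with the representable evaluation $\{\cJ(j,-),G\}\simeq Gj$, and recognize the resulting limit over the category of elements as $Nat(F,G)$; the universal-property verification above is the more economical of the two.
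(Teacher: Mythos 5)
Your proposal is correct and follows essentially the same route as the paper: both reduce the claim to exhibiting a natural isomorphism $\fD(c,Nat(F,G))\simeq\fD^{\cJ}(F,\fD(c,G-))$ and then invoking Yoneda together with \eqref{eq:limit}. The paper merely packages your equalizer-of-products presentation and explicit Cartesian-closed transposition into the end calculus, writing $Nat(F,G)\simeq\int_x\fD(Fx,Gx)$ and using the interchange $\int_x\fD(Fx,\fD(c,Gx))\simeq\fD\bigl(c,\int_x\fD(Fx,Gx)\bigr)$, which is exactly the step you verify by hand.
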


\begin{proof}
We use the fact that the space of natural transformations between $F$ and $G$ can be expressed as an \emph{end}: $Nat(F,G) \simeq \int_c \fD(Fc, Gc)$. Using the language of ends, we obtain the following isomorphisms:

\[
\begin{aligned}
\fD(c, \{F,G\}) & \simeq \fD^{\mc{J}}(F, \fD(c,G-)) = Nat(F, D(c, G-))\\
 	&\simeq \int_x \fD(Fx, D(c,Gx)) \\
	& \simeq \fD\left( c, \int_x \fD(Fx, Gx)\right) = \fD(c, Nat(F,G))\\
\end{aligned} 
\]
Therefore $\{F,G\}$ is isomorphic to $Nat(F,G)$. 
\end{proof}

Using the same preamble as the previous proposition, we can express the indexed limits and indexed colimits as diffeological spaces of natural transformations. We will state two propositions that are needed in the sequel. 

\begin{prop}Let $F: \mc{J} \rightarrow \fD$ and $G: \mc{J} \rightarrow \mc{C}$ be smooth functors, where $\mc{C}$ is a diffeological category. For any object $c$ of $\mc{C}$, we have the following isomorphism of diffeological spaces:
\begin{equation}
\mc{C}(c, \{F,G\}) \simeq \{F,\mc{C}(c, G-)\}.
\end{equation}
\end{prop}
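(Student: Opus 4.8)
The plan is to deduce the claimed diffeomorphism by chaining two isomorphisms already available in this section: I read the right-hand side as an indexed limit of an $\fD$-valued functor, and the left-hand side through the defining universal property of $\{F,G\}$.

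First I would note that the assignment $i \mapsto \mc{C}(c, Gi)$ underlies a smooth functor $\mc{C}(c, G-): \cJ \rightarrow \fD$. This is exactly the step where the hypothesis that $\mc{C}$ is a diffeological category is used: each $\mc{C}(c, Gi)$ is a diffeological space, and for an arrow $f: i \rightarrow j$ in $\cJ$ the induced map $\mc{C}(c, Gf): \mc{C}(c, Gi) \rightarrow \mc{C}(c, Gj)$ is smooth, since $Gf$ is an arrow in $\mc{C}$ and post-composition is smooth by the axiom \eqref{diffcat}. Hence the pair $(F, \mc{C}(c, G-))$ consists of two smooth functors $\cJ \rightarrow \fD$, and Proposition \ref{natend} applies verbatim, giving
\[
\{F, \mc{C}(c, G-)\} \simeq Nat(F, \mc{C}(c, G-)) = \fD^{\cJ}(F, \mc{C}(c, G-)).
\]
Second, I would invoke the defining property of the indexed limit $\{F,G\}$, namely the diffeomorphism \eqref{eq:limit}, which for the fixed object $c$ reads
\[
\mc{C}(c, \{F,G\}) \simeq \fD^{\cJ}(F, \mc{C}(c, G-)).
\]
Composing the two displayed diffeomorphisms then yields
\[
\mc{C}(c, \{F,G\}) \simeq \fD^{\cJ}(F, \mc{C}(c, G-)) \simeq \{F, \mc{C}(c, G-)\},
\]
which is precisely the assertion.

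The part that requires genuine care, rather than the formal chaining, is ensuring that both links are diffeomorphisms of diffeological spaces and not merely bijections of underlying sets. For \eqref{eq:limit} this is built into the definition of the indexed limit, and for Proposition \ref{natend} it is the content of that result; so the main obstacle reduces to justifying that Proposition \ref{natend} is legitimately applicable, i.e.\ that $\mc{C}(c, G-)$ really lands in $\fD$ as a \emph{smooth} functor, which is the verification carried out in the first step above. Finally, although the statement is phrased for a fixed $c$, the comparison is in fact natural in $c$: each side is representable in $c$ through its universal property, so a Yoneda argument promotes the pointwise diffeomorphism to a natural one, with all comparison maps smooth because they are assembled from composition maps and structure maps of diffeological categories, all of which are smooth by hypothesis.
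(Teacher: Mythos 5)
Your proposal is correct and follows essentially the same route as the paper's own proof: both chain the defining diffeomorphism \eqref{eq:limit} with Proposition \ref{natend} applied to the smooth functor $\mc{C}(c,G-): \mc{J} \rightarrow \fD$. Your added justification that $\mc{C}(c,G-)$ is indeed a smooth $\fD$-valued functor (via the composition axiom \eqref{diffcat}) is a detail the paper leaves implicit, but it does not change the argument.
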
 

\begin{proof}
$\mc{C}(c,G-)$ is a smooth functor from $\mc{J}$ to $\fD$, so applying the previous proposition (\ref{natend}) and the definition of indexed limit \eqref{eq:limit}, we have
\begin{displaymath}
\begin{aligned}
\mc{C}(c, \{F,G\}) &\simeq \fD^{\mc{J}}(F, \mc{C}(c,G-))\\
&= Nat(F,\mc{C}(c,G-))\\
&\simeq \{F,C(c,G-)\}\\
\end{aligned}
\end{displaymath}
\end{proof}

\begin{prop}Let $F \mc{J}^{op} \rightarrow \fD$  and $G: \mc{J} \rightarrow \mc{C}$ be smooth functors, where $\mc{C}$ is a diffeological category. For any object $c$ of $\mc{C}$, we have the following isomorphism of diffeological spaces:
\begin{equation}
\mc{C}(F\ast G, c) \simeq \{F,\mc{C}(G-, c)\}.
\end{equation}
\end{prop}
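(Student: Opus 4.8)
The plan is to dualize the proof of the preceding proposition: where that argument combined the indexed-limit formula \eqref{eq:limit} with Proposition \ref{natend}, here I would combine the indexed-colimit formula \eqref{eq:colimit} with the opposite-category instance of Proposition \ref{natend}. The observation that makes everything fit together is that, although $G$ is covariant, the assignment $i \mapsto \cC(Gi,c)$ is contravariant in the $\cJ$-variable, since the hom-functor $\cC(-,c)$ is contravariant in its first argument. Thus $\cC(G-,c)$ is naturally a smooth \emph{covariant} functor $\cJ^{op} \rightarrow \fD$, which places it in exactly the same setting as the index functor $F : \cJ^{op} \rightarrow \fD$.

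First I would verify that $\cC(G-,c) : \cJ^{op} \rightarrow \fD$ is genuinely smooth. For an arrow $f : j \rightarrow i$ in $\cJ$, precomposition with $Gf$ induces a map $\cC(Gi,c) \rightarrow \cC(Gj,c)$, $\phi \mapsto \phi \circ Gf$; its smoothness in both $f$ and $\phi$ follows from the smoothness of $G$ (so that $f \mapsto Gf$ is smooth) together with the diffeological composition axiom \eqref{diffcat} of $\cC$. Granting this, the definition of the indexed colimit \eqref{eq:colimit} supplies the diffeomorphism
\[
\cC(F \ast G, c) \simeq \fD^{\cJ^{op}}(F, \cC(G-,c)),
\]
and the right-hand side is, by definition, the diffeological space $Nat(F, \cC(G-,c))$ of smooth natural transformations between the two smooth functors $F$ and $\cC(G-,c)$ on $\cJ^{op}$.

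Finally I would apply Proposition \ref{natend}, taking its indexing category to be $\cJ^{op}$ and its two functors to be $F$ and $\cC(G-,c)$, which gives $Nat(F, \cC(G-,c)) \simeq \{F, \cC(G-,c)\}$; composing this with the previous diffeomorphism yields the claim. The main obstacle is the legitimacy of this last step, since Proposition \ref{natend} was stated only for functors out of $\cJ$. The remedy is to observe that its proof carries over verbatim with $\cJ^{op}$ in place of $\cJ$: the category $\cJ^{op}$ is again a small diffeological category, and the end computation $Nat(F,G) \simeq \int_x \fD(Fx, Gx)$ on which that proof rests is insensitive to the variance of the indexing category. Once this bookkeeping is confirmed, the chain of diffeomorphisms assembles into the desired isomorphism $\cC(F \ast G, c) \simeq \{F, \cC(G-,c)\}$.
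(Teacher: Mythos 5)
Your argument is essentially identical to the paper's proof: both apply the indexed-colimit characterization \eqref{eq:colimit} to identify $\mc{C}(F\ast G,c)$ with $\fD^{\mc{J}^{op}}(F,\mc{C}(G-,c)) = Nat(F,\mc{C}(G-,c))$, and then invoke Proposition \ref{natend} (read over $\mc{J}^{op}$) to recognize this as $\{F,\mc{C}(G-,c)\}$. Your additional remarks verifying the smoothness of $\mc{C}(G-,c)$ and the variance bookkeeping for Proposition \ref{natend} are sound and merely make explicit what the paper leaves implicit.
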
 

\begin{proof}
$\mc{C}(G-,c):\mc{J}^{op} \rightarrow \fD$ is a smooth functor, so applying the definition of indexed colimits and Proposition \ref{natend} we get
\begin{displaymath}
\mc{C}(F*G,c) \simeq \fD^{\mc{J}^{op}}(F, \mc{C}(G-,c)) = Nat(F, \mc{C}(G-,c)) \simeq \{F,\mc{C}(G-,c)\}
\end{displaymath}
\end{proof}

\begin{thm}
Let $F: \mc{C} \rightarrow \fD$ and $G: \mc{C} \rightarrow \fD$ be smooth functors. Then $Nat(F,G) \simeq \plim(elF\xrightarrow{Q}\mc{C} \xrightarrow{G} \fD)$ as diffeological spaces.
\end{thm}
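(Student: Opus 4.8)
The plan is to combine the density presentation of $F$ supplied by \eqref{repcolim} with the fact that the functor $Nat(-,G)$ carries colimits in its first argument to limits, and then to collapse the representable pieces by Yoneda. Throughout, $el F$ denotes the category of elements of $F$, whose objects are pairs $(c,x)$ with $x$ a point of the diffeological space $Fc$ and whose arrows $(c,x)\rightarrow(c',x')$ are maps $\phi:c\rightarrow c'$ in $\mc{C}$ with $(F\phi)(x)=x'$; the projection $Q$ sends $(c,x)\mapsto c$.

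First I would invoke \eqref{repcolim} to exhibit $F$ as a colimit of representables,
\[
F \simeq \colim\bigl( el F \xlongrightarrow{Q} \mc{C} \xlongrightarrow{M} \fD^{\mc{C}} \xlongrightarrow{y^{-1}} \fD \bigr),
\]
so that $F$ is presented as $\colim_{(c,x)}\mc{C}(c,-)$ indexed over $el F$. Next I would use the universal property of this colimit together with Proposition \ref{natend} to pass the colimit out through the contravariant hom-functor, obtaining
\[
Nat(F,G) \simeq Nat\Bigl(\colim_{(c,x)} \mc{C}(c,-),\, G\Bigr) \simeq \plim_{(c,x)} Nat\bigl(\mc{C}(c,-),\, G\bigr).
\]
Here a cylinder out of the colimit is the same datum as a compatible family of cylinders out of the individual representables, and since $\fD$ is complete this family assembles into a genuine limit in $\fD$. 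Finally I would apply the Yoneda lemma, which gives $Nat(\mc{C}(c,-),G)\simeq Gc$ smoothly in $c$, and substitute to obtain
\[
Nat(F,G) \simeq \plim_{(c,x)} Gc = \plim\bigl( el F \xlongrightarrow{Q} \mc{C} \xlongrightarrow{G} \fD \bigr),
\]
which is exactly the assertion.

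The hard part will be guaranteeing that each step is a \emph{diffeomorphism} of diffeological spaces and not merely a bijection of underlying sets. Concretely, the interchange of $\colim$ and $Nat$ must be established in the enriched sense: one has to verify that the correspondence sending a natural transformation $\theta$ to the family $(\theta_c(x))_{(c,x)}$ is smooth with smooth inverse, comparing the function-space diffeology on $Nat(F,G)$ with the canonical limit diffeology on $\plim(el F \rightarrow \fD)$. This reduces to the smoothness of evaluation and of composition in $\fD$, which is available because $\fD$ is Cartesian closed; nonetheless it is the point at which the diffeological structure, rather than the formal categorical manipulation, has to be checked, and it is where I expect the real work to lie.
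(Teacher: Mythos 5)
Your proposal is correct in outline but takes a genuinely different route from the paper's. The paper proves this theorem by a direct element-level construction: it takes a limiting cone for $elF\xrightarrow{Q}\mc{C}\xrightarrow{G}\fD$, views its components as points $\alpha_{(c,x)}\in Gc$, defines $\beta_{c}(x):=\alpha_{(c,x)}$, and observes that the cone compatibility condition $Gf\,\alpha_{(c,x)}=\alpha_{(c',x')}$ is literally the naturality square for $\beta:F\dot{\rightarrow}G$; no density presentation or Yoneda collapse appears. You instead run the formal argument --- present $F$ as a colimit of representables via \eqref{repcolim}, turn $Nat(-,G)$ applied to that colimit into a limit of $Nat(\mc{C}(c,-),G)$, and reduce each term to $Gc$ by (enriched) Yoneda. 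Your route is essentially the one the paper itself deploys for the more general Corollary \ref{replim1}, where $G$ lands in an arbitrary diffeological category $\mc{B}$ and the element-wise description of a cone is unavailable; so your argument generalizes immediately, at the cost of resting on the density presentation \eqref{repcolim}, whose validity \emph{as an isomorphism in $\fD^{\mc{C}}$ rather than in $Set^{\mc{C}}$} is exactly the kind of diffeological point you flag at the end. The paper's direct construction buys an explicit description of the correspondence (a natural transformation \emph{is} a compatible family $(\theta_{c}(x))_{(c,x)}$), which makes the underlying bijection transparent, but note that it only writes down the map from cones to natural transformations and is silent both on the inverse and on comparing the function-space diffeology on $Nat(F,G)$ with the limit diffeology --- so your closing caveat about where the real work lies applies with equal force to the paper's own proof, and identifying that issue is a point in your favor rather than a gap.
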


\begin{proof}
The limiting cone for the composite functor $elF\xrightarrow{Q}\mc{C} \xrightarrow{G} \fD$ consists of objects $\alpha_{(c,x)}: \star \rightarrow Gc$, where $\star$ denotes a 1-point set. For a map $f^{*}: (c,x) \rightarrow (c^{\prime},x^{\prime})$, $Gf \alpha_{(c,x)} = \alpha_{(c^{\prime},x^{\prime})}$, since $\alpha$ is a cone. Define a natural transformation $\beta: F\dot{\rightarrow}G $ by $\beta_{c}(x) := \alpha_{(c,x)}$. For $f^{*}: (c,x) \rightarrow (c^{\prime},x^{\prime})$, we have:
\begin{displaymath}
\begin{aligned}
Gf(\beta_{c}(x)) &= Gf\alpha_{(c,x)}= \alpha_{(c^{\prime},x^{\prime})}\\
\beta_{c^{\prime}}(Ff(x)) &= \beta_{c^{\prime}}(x^{\prime}) = \alpha_{(c^{\prime},x^{\prime})}
\end{aligned}
\end{displaymath} giving the following
commutative diagram:
\begin{displaymath}
\xymatrixcolsep{5pc}\xymatrix{
Fc\ar[r]^{\beta_{c}}\ar[d]_{Ff} & Gc\ar[d]^{Gf}\\
Fc^{\prime}\ar[r]_{\beta_{c^{\prime}}} & Gc^{\prime}
}
\end{displaymath}
Hence $Nat(F,G) \simeq \plim(elF\xrightarrow{Q}\mc{C} \xrightarrow{G} \fD) \simeq \{F,G\}$.
\end{proof}

\begin{cor}\label{replim1}
Let $G: \mc{C} \rightarrow \mc{B}$ be a smooth functor of diffeological categories. Then given a smooth index functor $F: \mc{C} \rightarrow  \fD$,
\begin{displaymath}
\{F,G\} \simeq \plim(elF \xrightarrow{Q} \mc{C} \xrightarrow{G} \mc{B})
\end{displaymath}
\end{cor}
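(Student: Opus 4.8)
The plan is to reduce the general statement, in which $G$ takes values in an arbitrary diffeological category $\mc{B}$, to the case already settled in the preceding Theorem, where the target is $\fD$ itself. The bridge is the diffeological Yoneda lemma: since both $\{F,G\}$ and $\plim(elF \xrightarrow{Q} \mc{C} \xrightarrow{G} \mc{B})$ are objects of $\mc{B}$, it suffices to produce a diffeomorphism $\mc{B}(b,\{F,G\}) \simeq \mc{B}(b,\plim(G\circ Q))$ natural in the object $b$ of $\mc{B}$, and then invoke Yoneda to conclude that the two objects are isomorphic.

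First I would fix $b$ and transport the indexed limit across the representable functor. Applying the Proposition that expresses $\mc{C}(c,\{F,G\})$ as an indexed limit (reading its indexing category as $\mc{C}$ and its target category as $\mc{B}$), I obtain $\mc{B}(b,\{F,G\}) \simeq \{F,\mc{B}(b,G-)\}$. The point of this move is that $\mc{B}(b,G-)$ is a smooth functor $\mc{C}\rightarrow\fD$, so now both $F$ and $\mc{B}(b,G-)$ are smooth functors into $\fD$ over the same indexing category $\mc{C}$, which is exactly the situation covered by Proposition \ref{natend} and the preceding Theorem.

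Next I would apply Proposition \ref{natend} and then the preceding Theorem to rewrite $\{F,\mc{B}(b,G-)\} \simeq Nat(F,\mc{B}(b,G-)) \simeq \plim(elF \xrightarrow{Q} \mc{C} \xrightarrow{\mc{B}(b,G-)} \fD)$. The composite functor on the right is precisely $\mc{B}(b,-)\circ G\circ Q$, so I would then use the fact that the representable functor $\mc{B}(b,-)$ preserves limits to pull it outside, giving $\plim(\mc{B}(b,-)\circ G\circ Q) \simeq \mc{B}(b,\plim(G\circ Q))$. Chaining the three diffeomorphisms yields $\mc{B}(b,\{F,G\}) \simeq \mc{B}(b,\plim(G\circ Q))$, and checking that each step is natural in $b$ assembles them into a natural isomorphism of the representables $\mc{B}(-,\{F,G\})$ and $\mc{B}(-,\plim(G\circ Q))$; Yoneda then delivers the claimed isomorphism $\{F,G\}\simeq\plim(elF \xrightarrow{Q} \mc{C} \xrightarrow{G} \mc{B})$.

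The main obstacle I expect is not the formal chain of isomorphisms but the two diffeological facts on which it rests. First, I must justify that the representable functor $\mc{B}(b,-)$ preserves the diffeological limit, that is, that the canonical comparison map $\plim(\mc{B}(b,-)\circ G\circ Q)\rightarrow \mc{B}(b,\plim(G\circ Q))$ is a genuine diffeomorphism and not merely a set-theoretic bijection; this is where the universal property of limits in a diffeological category, together with the Cartesian closed structure of $\fD$, must be used carefully so that the defining isomorphisms and their inverses are both smooth. Second, I need the Yoneda lemma in its enriched, diffeological form, and I must verify that each isomorphism above is genuinely natural in $b$, so that they glue into a single natural transformation before Yoneda is applied.
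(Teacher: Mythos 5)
Your proposal is correct, but it takes a genuinely different route from the paper. You probe both objects with representables $\mc{B}(b,-)$, reduce to the $\fD$-valued case via the proposition $\mc{B}(b,\{F,G\})\simeq\{F,\mc{B}(b,G-)\}$, invoke Proposition \ref{natend} and the theorem immediately preceding the corollary to identify $\{F,\mc{B}(b,G-)\}$ with $\plim\bigl(\mc{B}(b,-)\circ G\circ Q\bigr)$, pull the representable out of the limit, and finish with the enriched Yoneda lemma. The paper instead works directly on the weight: it uses equation \eqref{repcolim} to write $F$ as a colimit of representables $\mc{C}(c,-)$ over $elF$, then applies the interchange $\{\Delta 1 * \mc{C}(c,-),G\}\simeq\{\Delta 1,\{\mc{C}(c,-),G\}\}$ and the Yoneda evaluation $\{\mc{C}(c,-),G\}\simeq Gc$ to land on $\plim_{c\in elF}Gc$, never quantifying over objects $b$ of $\mc{B}$ and never needing the limit-preservation of representables as a separate lemma. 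What your route buys is that it actually puts the preceding theorem to work (the paper proves it but does not use it in this corollary) and it isolates exactly the two analytic verifications needed in the diffeological setting, which you correctly flag: that the comparison map $\plim(\mc{B}(b,-)\circ G\circ Q)\rightarrow\mc{B}(b,\plim(G\circ Q))$ is a diffeomorphism and that the chain is natural in $b$. What the paper's route buys is brevity and independence from the enriched Yoneda lemma at the level of $\mc{B}$. Both arguments share the same unaddressed subtlety: reducing an indexed limit to a conical limit over $elF$ rests on the Set-style decomposition of $F$ as a conical colimit of representables rather than the enriched (coend) density formula, and neither your proof nor the paper's justifies why this suffices for $\fD$-enrichment; so on that point you are no worse off than the paper.
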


\begin{proof}
Equation \eqref{repcolim} gives $F \simeq \colim \left(elF \xrightarrow{Q} \mc{C} \xrightarrow{y^{-1}} \fD^{\mc{C}}\right) \simeq \colim \left(C(c,-): el F \rightarrow \fD^{\mc{C}}\right)$. Therefore:
\begin{displaymath}
\begin{aligned}
\{F,G\} \simeq \{\colim \mc{C}(c,-),G\} & \simeq \{\Delta 1*\mc{C}(c,-),G\}\\
&\simeq \left\{\Delta 1,\{\mc{C}(c,-), G\}\right\}\\
&\simeq \{\Delta 1,Gc\}\\
&\simeq \plim_{(c \in elF) } {Gc} 
\end{aligned} 
\end{displaymath}
where the canonical limit is indexed by the category $el F$ of elements of $F$.
On the other hand, $\plim(elF \xrightarrow{Q} \mc{C} \xrightarrow{G} \mc{B}) = \plim_{(c \in el F)}{Gc}$. This establishes the isomorphism. 
\end{proof}
A similar result holds for the case of indexed colimits. Let $F:\mc{C}^{op} \rightarrow \fD$ be a smooth index functor and $G:\mc{C}
\rightarrow \mc{B}$ be a smooth functor of diffeological categories. Then the following result holds:
\begin{prop}\label{repcolim2}
\begin{displaymath}
F*G \simeq \colim \left((elF)^{op} \xrightarrow{Q^{op}} \mc{C} \xrightarrow{G} \mc{B}\right)
\end{displaymath}
\end{prop}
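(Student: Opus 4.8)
The plan is to dualize the proof of Corollary \ref{replim1} line by line, trading indexed limits for indexed colimits, the contravariant hom $\mc{C}(c,-)$ for the covariant hom $\mc{C}(-,c)$, and the covariant weight $F:\mc{C}\to\fD$ for the contravariant weight $F:\mc{C}^{op}\to\fD$. The three ingredients are: the density (co-Yoneda) presentation of $F$ as a colimit of representables, the cocontinuity of the indexed colimit $(-)*G$ in its weight variable, and the Yoneda reduction $\mc{C}(-,c)*G\simeq Gc$ for representable weights, which is the co-Yoneda counterpart of $\{\mc{C}(c,-),G\}\simeq Gc$ used in \ref{replim1}.

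First I would record the dual of the density formula \eqref{repcolim}: a smooth weight $F:\mc{C}^{op}\to\fD$ is canonically a colimit of representables, $F\simeq\colim(elF\xrightarrow{Q}\mc{C}^{op}\xrightarrow{Y}\fD^{\mc{C}^{op}})$, where $Y$ is the Yoneda embedding $c\mapsto\mc{C}(-,c)$ and $Q$ sends $\langle c,x\rangle\mapsto c$; by \eqref{cancolimit} this conical colimit is a $\Delta1$-indexed colimit over $elF$. The defining adjunction \eqref{eq:colimit} reads $\mc{C}(F*G,c)\simeq Nat(F,\mc{C}(G-,c))$, and since $Nat(-,H)$ turns colimits of weights into limits of sets, the functor $(-)*G$ preserves colimits in its weight variable. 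Pushing $*G$ through the density colimit and applying the interchange (Fubini) law for indexed colimits from Kelly \cite{Kel} gives
\[
F*G \simeq \Bigl(\colim_{\langle c,x\rangle\in elF}\mc{C}(-,c)\Bigr)*G \simeq \colim_{\langle c,x\rangle\in elF}\bigl(\mc{C}(-,c)*G\bigr) \simeq \colim_{\langle c,x\rangle\in elF} Gc,
\]
where the last step collapses each representable weight by the Yoneda reduction above.

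It then remains to identify this conical colimit with the diagram in the statement. Unwinding the paper's convention for the category of elements of the \emph{contravariant} functor $F$, one checks that $(elF)^{op}$ is the Grothendieck category $\int F$ and that $Q^{op}:(elF)^{op}\to\mc{C}$ is precisely the covariant projection along which the assignment $\langle c,x\rangle\mapsto Gc$ becomes the composite functor $G\circ Q^{op}$. Hence $\colim_{\langle c,x\rangle\in elF}Gc\simeq\colim\bigl((elF)^{op}\xrightarrow{Q^{op}}\mc{C}\xrightarrow{G}\mc{B}\bigr)$, which is the claim. The completeness, cocompleteness and Cartesian closedness of $\fD$ established in Section 2 guarantee that all weighted colimits and representable presentations invoked above exist in the diffeological enriched setting.

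The main obstacle I expect is bookkeeping the variances correctly. Because the weight is now contravariant, both the direction of the projection out of $elF$ and the orientation of the indexing category flip, and it is easy to land on $\int F$ versus $(\int F)^{op}$; pinning down that $Q^{op}$ really supplies the covariant diagram $G\circ Q^{op}$ is the delicate point. The secondary difficulty is justifying the interchange law for the iterated colimit in the $\fD$-enriched rather than $\mathrm{Set}$-based context, for which I would lean on Kelly's general enriched results \cite{Kel}, applicable precisely because $\fD$ is a complete, cocomplete, Cartesian closed enriching category.
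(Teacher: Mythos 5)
Your proposal is correct and follows essentially the same route as the paper: both present $F$ as a colimit of representables via the density formula \eqref{repcolim}, push $-*G$ through that colimit (the paper phrases this as the associativity $(\Delta 1*\mc{C}(-,c))*G\simeq\Delta 1*(\mc{C}(-,c)*G)$, you phrase it as cocontinuity of $-*G$ in the weight, which is the same fact), collapse each representable weight by Yoneda reduction to $Gc$, and identify the resulting conical colimit over $elF$ with the stated diagram. Your added care about the variance of $Q^{op}$ and the enriched interchange law is a reasonable elaboration of steps the paper treats as immediate.
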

\begin{proof}
As before, Theorem \eqref{repcolim} gives a representation of $F$, this time as $F \simeq \colim \mc{C}(-,c)$ since $F$ is a contravariant functor. Therefore 
\begin{displaymath}
\begin{aligned}
F*G &\simeq \left(\colim\mc{C}(-,c)\right)*G\\
&\simeq (\Delta 1*\mc{C}(-,c))*G \\
&\simeq \Delta 1*\left(\mc{C}(-,c)*G \right)\\
&\simeq \Delta 1*(Gc)\\
&\simeq \colim_{(c \in elF)}(Gc)\\
\end{aligned}
\end{displaymath}
The other side of the isomorphism is $\colim\left((elF)^{op} \xrightarrow{Q^{op}} \mc{C} \xrightarrow{G} \mc{B} \right) = \colim_{(c \in elF)}Gc$. The isomorphism is now established.
\end{proof}

\begin{prop}\label{rightkandef}
\begin{displaymath}
\plim \left[(c\downarrow K)\xlongrightarrow{Q} \mc{M} \xlongrightarrow{T} \mc{A} \right] \simeq \{\mc{C}(c,K-),T \}
\end{displaymath}
\end{prop}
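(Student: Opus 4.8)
The plan is to recognize the statement as a direct instance of Corollary \ref{replim1}, once the indexing data are matched up correctly. First I would verify that $F := \mc{C}(c,K-)$ is a smooth index functor $\mc{M} \rightarrow \fD$: for each object $m$ of $\mc{M}$ the hom-object $\mc{C}(c,Km)$ is a diffeological space because $\mc{C}$ is a diffeological category, and functoriality together with smoothness in $m$ follows from the smoothness of composition in $\mc{C}$ (the defining axiom \eqref{diffcat}) precomposed with the action of $K$ on arrows. This confirms that $F$ genuinely lands in $\fD$ as a smooth functor, not merely as a $\mathrm{Set}$-valued one, so that the hypotheses of Corollary \ref{replim1} are literally satisfied.

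The second step is to recall the identification established in the discussion preceding this proposition, namely that the category of elements $elF$ of $F = \mc{C}(c,K-)$ is precisely the comma category $(c\downarrow K)$. Under this identification the projection functor $Q : elF \rightarrow \mc{M}$, sending a pair $\langle m, f\rangle$ with $f \in \mc{C}(c,Km)$ to $m$, coincides with the canonical projection $(c\downarrow K) \rightarrow \mc{M}$ appearing in the statement. I would emphasize that this matching respects the diffeological structure, so the two projection functors are the same smooth functor.

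Third, I would invoke Corollary \ref{replim1} with the substitution of roles: the base category $\mc{C}$ of the corollary is taken to be $\mc{M}$, the smooth index functor is $F = \mc{C}(c,K-) : \mc{M} \rightarrow \fD$, and the functor $G : \mc{C} \rightarrow \mc{B}$ of the corollary is taken to be $T : \mc{M} \rightarrow \mc{A}$. The corollary then yields the diffeomorphism
\[
\{\mc{C}(c,K-), T\} \simeq \plim\left(elF \xrightarrow{Q} \mc{M} \xrightarrow{T} \mc{A}\right).
\]
Substituting the identification $elF = (c\downarrow K)$ from the previous step transforms the right-hand side into exactly $\plim\left[(c\downarrow K) \xlongrightarrow{Q} \mc{M} \xlongrightarrow{T} \mc{A}\right]$, which gives the claimed isomorphism.

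The only genuine obstacle I anticipate is the bookkeeping in the first two steps: one must be careful that $\mc{C}(c,K-)$ is a \emph{smooth} diffeological functor and that the comma-category/category-of-elements identification is an isomorphism of diffeological categories compatible with $Q$. Everything after that is formal substitution, since the essential work — expressing an indexed limit against a representable-type weight as an ordinary limit over the category of elements — has already been carried out in Corollary \ref{replim1} by way of the colimit-of-representables decomposition \eqref{repcolim}.
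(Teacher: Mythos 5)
Your proof is correct and takes essentially the same route as the paper's: both arguments simply instantiate Corollary \ref{replim1} with the weight $F=\mc{C}(c,K-)$ and then substitute the identification $elF=(c\downarrow K)$ established in Section 3. The extra care you take in checking that $\mc{C}(c,K-)$ is a smooth $\fD$-valued functor is a reasonable elaboration the paper leaves implicit, not a different method.
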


\begin{proof}
Corollary \eqref{replim1} gives $\{F,T\} \simeq \plim\left[el F \xlongrightarrow{Q} \mc{M} \xlongrightarrow{T} \mc{A}  \right]$. Defining $F$ as $\mc{C}(c,K-)$ gives $el F = (c\downarrow K)$ as shown above. Hence the result holds. 
\end{proof}
Propositon \eqref{rightkandef} shows that the right Kan extension of the smooth functor $T$, if it exists, is isomorphic to the indexed limit. This leads to the following theorem:

\begin{thm}
The \emph{right Kan extension} of the smooth functor $T: \mc{M} \rightarrow \mc{A}$ along $K: \mc{M} \rightarrow \mc{C}$ is the functor $R: \mc{C} \rightarrow \mc{A}$, given by $Rc = \{\mc{C}(c,K-),T \}$, whenever it exists.
\end{thm}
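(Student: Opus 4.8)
The plan is to assemble the statement from two facts already in hand: the classical pointwise description of right Kan extensions recalled in Section 3, and the identification of that pointwise limit with an indexed limit established in Proposition \ref{rightkandef}. Since $\mc{A}$ is assumed small complete, the right Kan extension $R = Ran_{K}T$ exists and is given objectwise by $Rc = \plim\left[(c\downarrow K) \xlongrightarrow{Q} \mc{M} \xlongrightarrow{T} \mc{A}\right]$. Proposition \ref{rightkandef} supplies the diffeomorphism $\plim\left[(c\downarrow K)\xlongrightarrow{Q}\mc{M}\xlongrightarrow{T}\mc{A}\right] \simeq \{\mc{C}(c,K-),T\}$, obtained by recognizing the comma category $(c\downarrow K)$ as the category of elements of the weight $F = \mc{C}(c,K-)$ and invoking Corollary \ref{replim1}. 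Composing these gives the asserted formula $Rc \simeq \{\mc{C}(c,K-),T\}$ at the level of objects.

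The first step I would carry out is to promote this objectwise equivalence to a smooth functor $R\colon \mc{C}\to\mc{A}$. An arrow $g\colon c\to c'$ in $\mc{C}$ induces, by precomposition, a natural transformation of weights $\mc{C}(g,K-)\colon \mc{C}(c',K-)\dot{\rightarrow}\mc{C}(c,K-)$; since the defining isomorphism \eqref{eq:limit} exhibits $\{-,T\}$ as contravariant in its weight argument, this yields an arrow $Rg\colon Rc\to Rc'$. Functoriality ($R(\mathrm{id})=\mathrm{id}$ and $R(g'g)=Rg'\,Rg$) then follows from the corresponding functoriality of $g\mapsto\mc{C}(g,K-)$ together with the uniqueness clause in the universal property of the indexed limit. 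Smoothness of $R$ on hom-objects is inherited from smoothness of composition in $\mc{C}$ and from the construction of $\{F,T\}$ as a diffeological space of cylinders.

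The decisive step is to verify that the functor so defined satisfies the universal property \eqref{adjunction1}, rather than merely reproducing the correct values. For a test functor $S\colon\mc{C}\to\mc{A}$ I would expand $\mc{A}^{\mc{C}}(S,R)$ as an end over $\mc{C}$, substitute $Rc\simeq\{\mc{C}(c,K-),T\}$, and then apply the defining isomorphism \eqref{eq:limit} together with the enriched Yoneda lemma. Concretely, the chain
\[
\mc{A}(Sc, Rc) \simeq \mc{A}\bigl(Sc,\{\mc{C}(c,K-),T\}\bigr) \simeq \fD^{\mc{M}}\bigl(\mc{C}(c,K-),\mc{A}(Sc,T-)\bigr)
\]
is natural in $c$, and assembling it over all $c$ via the end formula for natural transformations should collapse the resulting double end to $\mc{A}^{\mc{M}}(SK,T)$, recovering the adjunction $\mc{A}^{\mc{C}}(S,R)\simeq\mc{A}^{\mc{M}}(SK,T)$ that characterizes $R = Ran_{K}T$. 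Uniqueness up to natural isomorphism then comes for free from the universal property recorded after \eqref{adjunction1}.

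I expect the main obstacle to be the enriched bookkeeping in this last step: one must check that every bijection produced along the way is in fact a \emph{diffeomorphism} of diffeological spaces and is \emph{smoothly natural}, so that the resulting adjunction lives in the diffeologically enriched functor categories $\mc{A}^{\mc{C}}$ and $\mc{A}^{\mc{M}}$ rather than merely in $\mathrm{Set}$. Because $\fD$ is Cartesian closed and complete, the closed-category machinery of Kelly \cite{Kel} applies formally; but the smoothness of the naturality squares and of the weight functor $\mc{C}(c,K-)$ must be confirmed directly from the function-space diffeology. This is the only point at which the proof uses more than the formal manipulations valid in any complete closed category.
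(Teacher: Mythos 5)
Your proposal is correct, and its decisive step --- establishing the universal property by expanding $\mc{A}^{\mc{C}}(S,R)$ as an end, substituting $Rc\simeq\{\mc{C}(c,K-),T\}$, applying \eqref{eq:limit}, Fubini, and the Yoneda reduction to collapse the double end to $\mc{A}^{\mc{M}}(SK,T)$ --- is exactly the computation the paper performs. Where you diverge is in the surrounding bookkeeping, and the two treatments are complementary. The paper makes no appeal to the classical pointwise-limit formula or to Proposition \ref{rightkandef} inside this proof; instead it spends the first half explicitly constructing the counit $\varepsilon: RK \dot{\longrightarrow} T$ by specializing the universal cylinder $\lambda_{c,-}:\mc{C}(c,K-)\dot{\longrightarrow}\mc{A}(Rc,T-)$ to $c=Km$ and evaluating at $1_{Km}$, then verifying naturality of $\varepsilon$ by a diagram chase, so that the conclusion delivers the full Kan-extension data $(R,\varepsilon)$ demanded by the definition recalled in Section 3. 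You omit this explicit construction and recover the counit only implicitly from the adjunction \eqref{adjunction1} via Yoneda, which is legitimate but less informative. Conversely, you address a point the paper passes over in silence: how $R$ acts on arrows of $\mc{C}$ (via contravariance of the weight $c\mapsto\mc{C}(c,K-)$ and the universal property) and why that action is functorial and smooth. Your closing caveat --- that each bijection must be checked to be a diffeomorphism and smoothly natural so the adjunction lives in the enriched functor categories --- is well taken and is precisely the point the paper treats only by fiat in declaring its isomorphisms to be of diffeological spaces.
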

\begin{proof}
We first construct a smooth natural transformation $\mu: RK \dot{\longrightarrow} T$. Since $Rc = \{\mc{C}(c,K-),T\}$ is the limit of $T$ indexed by $\mc{C}(c,K-)$, equation \eqref{eq:limit} from definition \eqref{indexformulas}  gives a universal arrow $\lambda_{c,-}: \mc{C}(c,K-) \dot{\longrightarrow} \mc{A}(Rc,T-)$. Suppose $c = Km$ for some object $m$ of $\mc{M}$. This gives the transformation
$\lambda_{Km,-}:\mc{C}(Km,K-) \dot{\longrightarrow} \mc{A}(RKm,T-)$. Choosing the second component of $\lambda$ to be $m$ gives
\begin{displaymath}
\lambda_{Km,m}: \mc{C}(Km,Km) \dot{\longrightarrow} \mc{A}(RKm,Tm)
\end{displaymath}
Now define $\varepsilon_{m}: RKm \rightarrow Tm$ as $\varepsilon_{m} = \lambda_{Km,m}(1_{Km})$, where $1_{Km}$ is the identity map in $\mc{C}(Km, Km)$. We need to show that $\varepsilon_{m}$ forms the component of a smooth natural transformation $\varepsilon: RK \dot{\longrightarrow} T$. Let $g: m \rightarrow n$ be an arrow in $\mc{M}$. We have the following commutative diagram:
\begin{displaymath}
\xymatrixcolsep{5pc}\xymatrix{Km \ar[r]\ar[d]_{Kg} & Km\ar[d]^{Kg}\\
Kn\ar[r] & Kn}
\end{displaymath}
from which we deduce clearly that $1_{Kn}\circ Kg = Kg \circ 1_{Km}$. 

Now consider the following commutative diagrams
\begin{equation}\label{commdiag1}
\xymatrixcolsep{5pc}\xymatrixrowsep{4pc}\xymatrix{\mc{C}(Km,Km) \ar[r]^{\lambda_{Km,m}}\ar[d]_{\mc{C}(1,Kg)} & \mc{A}(RKm,Tm)\ar[d]^{\mc{A}(1,Tg)}\\
\mc{C}(Km,Kn) \ar[r]^{\lambda_{Km,n}} & \mc{A}(RKm,Tn)\\
\mc{C}(Kn,Kn) \ar[r]^{\lambda_{Kn,n}}\ar[u]^{\mc{C}(Kg,1)} & \mc{A}(RKn,Tn)\ar[u]_{\mc{A}(RKg,1)}}
\end{equation}
Both the upper and lower squares commute, since $\lambda$ is a universal arrow.
\begin{equation}\label{comdiag2}
\xymatrixcolsep{4pc}\xymatrixrowsep{4pc}\xymatrix{1_{Km}\ar@{|->}[r]\ar@{|->}[d] & \varepsilon_{m}\ar@{|->}[d] \\
Kg \circ 1_{Km}\ar@{|->}[r] & \lambda_{Km,n}(Kg \circ 1_{Km})= Tg \circ \varepsilon_{m} }
\end{equation}
\begin{equation}\label{comdiag3}
\xymatrixcolsep{4pc}\xymatrixrowsep{4pc}\xymatrix{1_{Kn} \circ Kg\ar@{|->}[r] & \lambda_{Km,n}(1_{Kn} \circ Kg)= RKg \circ \varepsilon_{n}\\
1_{Kn}\ar@{|->}[r]\ar@{|->}[u] & \varepsilon_{n}\ar@{|->}[u] }
\end{equation}
The result $1_{Kn}\circ Kg = Kg \circ 1_{Km}$ enables us to conclude from the commutative diagrams that $Tg \circ \varepsilon_{m}=
RKg \circ \varepsilon_{n}$, establishing the natural transformation $\varepsilon: RK \dot{\longrightarrow}T$.

Now suppose $S:\mc{C} \longrightarrow \mc{A}$ is a smooth functor with $\sigma: SK \dot{\longrightarrow} T$ a smooth natural transformation. We show that $\sigma$ uniquely factors through $\varepsilon$ by means of the adjunction in equation \eqref{adjunction1}. We shall use Kelly's approach by employing the calculus of ends and coends. We have:

\begin{displaymath}
\begin{aligned}
Nat(S,R) &= \mc{A^{C}}(S,R) \\
&\simeq \int_{c}\mc{A}(Sc,Rc) \\
&= \int_{c} \mc{A}\left(Sc,\{\mc{C}(c,K-),T \} \right)\\
\end{aligned}
\end{displaymath}
 Applying equation \eqref{eq:limit}  in the case where $F =\mc{C}(c,K-)$ and $G = T$, we have 
 \begin{displaymath}
 \begin{aligned}
 \mc{A}\left(Sc,\{\mc{C}(c,K-),T \} \right)&\simeq  \fD^{\mc{M}}\left[\mc{C}(c,K-),\mc{A}(Sc,T-) \right]\\
\therefore \quad \int_{c} \mc{A}\left(Sc,\{\mc{C}(c,K-),T \} \right)&\simeq \int_{c} \fD^{\mc{M}}\left[\mc{C}(c,K-),\mc{A}(Sc,T-) \right]\\
\text{ Now } \fD^{\mc{M}}\left[\mc{C}(c,K-),\mc{A}(Sc,T-) \right]&= Nat\left(\mc{C}(c,K-),\mc{A}(Sc,T-) \right). \\
\end{aligned}
\end{displaymath}
Writing the above equation as an end, we get
\begin{displaymath}
\begin{aligned}
\fD^{\mc{M}}\left[\mc{C}(c,K-),\mc{A}(Sc,T-) \right]&= \int_{m}\fD\left[\mc{C}(c,Km), \mc{A}(Sc, Tm) \right] \\
\text{So, }\int_{c}\mc{A}\left(Sc,\{\mc{C}(c,K-),T \} \right) &\simeq \int_{c}\int_{m}\fD\left[\mc{C}(c,Km), \mc{A}(Sc, Tm) \right] \\
\end{aligned}
\end{displaymath}
Applying the Fubini Theorem,
\begin{displaymath}
\begin{aligned}
\int_{c}\mc{A}\left(Sc,\{\mc{C}(c,K-),T \} \right) &\simeq \int_{m}\int_{c}\fD\left[\mc{C}(c,Km), \mc{A}(Sc, Tm)  \right]\\
&\simeq \int_{m}\fD^{\mc{C}^{op}}\left[\mc{C}(-,Km),\mc{A}(S-,Tm) \right] \\
&\simeq \int_{m} \mc{A}(SKm, Tm)\\
&\simeq \mc{A^{M}}(SK,T) = Nat(SK,T)\\
\end{aligned}
\end{displaymath} 
The isomorphism $Nat(S,R) \simeq Nat(SK,T)$ gives by Yoneda's Proposition (\cite {Mac}, Proposition $III.2.1$) that the natural transformation $\varepsilon :RK \dot{\longrightarrow} T$ is universal. Hence $(R, \varepsilon)$ is a right Kan extension of the smooth functor $T$.
\end{proof}
 The process of establishing the left Kan extension is dual to the above. Suppose $\mc{A}$, $\mc{C}$ and $\mc{M}$ are diffeological categories with $K: \mc{M} \rightarrow \mc{C}$ and $T: \mc{M} \rightarrow \mc{A}$ smooth functors. Let $F = \mc{C}(K-,c): \mc{M}^{op} \longrightarrow \fD$ be the smooth functor defined by the association $m \longmapsto \mc{C}(Km,c)$. For an arrow $\tau: m \rightarrow
 m^{\prime}$, we have the smooth function $\mc{C}(K\tau,c): \mc{C}(Km^{\prime},c) \longrightarrow \mc{C}(Km,c)$. Hence we have an induced map 
 \begin{displaymath}
 \begin{aligned}
 F^{\ast}: \mc{M}^{op}(m^{\prime},m)&\longrightarrow \fD\left(\mc{C}(Km^{\prime},c), \mc{C}(Km,c) \right)\\
 \left(F^{\ast}\tau^{op}\right)f^{\prime} &= f^{\prime}\circ K\tau\\
 \end{aligned}
 \end{displaymath}
 where $\tau^{op}:m^{\prime} \rightarrow m$ is in a one-to-one correspondence with $\tau:m \rightarrow m^{\prime}$. This gives the commutative diagram
\begin{displaymath}
\xymatrixcolsep{5pc}\xymatrix{Km\ar[rd]_{f}\ar[rr]^{K\tau} & & Km^{\prime}\ar[ld]^{f^{\prime}}\\
& c}
\end{displaymath} 
The opposite category $(el F)^{op}$ of elements of $F$ has as objects $<m,f>$, and arrows $\tau^{\ast}:<m^{\prime},f^{\prime}> \longrightarrow <m,f>$ induced by arrows $\tau: m \rightarrow m^{\prime}$ from the original category $\mc{M}$ such that $\tau^{\ast}f^{\prime} = f$. From the definition of the comma category, $(el F)^{op}$ is exactly the category $(K \downarrow c)$.

\begin{prop}
\begin{displaymath}
\colim \left[(K\downarrow c) \xlongrightarrow{Q^{op}} \mc{M} \xlongrightarrow{T} \mc{A}\right] \simeq \mc{C}(K-,c) * T
\end{displaymath}
\end{prop}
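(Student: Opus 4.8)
The plan is to obtain this statement as the exact dual of Proposition \ref{rightkandef}. That earlier proposition was deduced by specializing Corollary \ref{replim1} to the covariant index functor $\mc{C}(c,K-)$; here I would instead specialize Proposition \ref{repcolim2}, its colimit analogue, to the contravariant index functor $F = \mc{C}(K-,c)$ that was just constructed, taking $\mc{M}$ in the role of the source category $\mc{C}$ of Proposition \ref{repcolim2} and $\mc{A}$ in the role of $\mc{B}$.

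First I would record that the hypotheses of Proposition \ref{repcolim2} are met. The preceding paragraph already exhibits $F = \mc{C}(K-,c): \mc{M}^{op} \rightarrow \fD$ as a functor: to an arrow $\tau: m \rightarrow m^{\prime}$ it assigns $\mc{C}(K\tau,c): \mc{C}(Km^{\prime},c) \rightarrow \mc{C}(Km,c)$. Smoothness of $F$ follows because $\mc{C}$ is a diffeological category, so composition $f^{\prime} \mapsto f^{\prime}\circ K\tau$ is a smooth map of hom-spaces, which is precisely the content of the displayed formula $(F^{\ast}\tau^{op})f^{\prime} = f^{\prime}\circ K\tau$. Together with the smooth functor $T: \mc{M} \rightarrow \mc{A}$, this places us squarely in the setting of Proposition \ref{repcolim2}.

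Applying Proposition \ref{repcolim2} with this $F$ and with $G = T$ then yields immediately
\begin{displaymath}
\mc{C}(K-,c) * T \simeq \colim \left((elF)^{op} \xrightarrow{Q^{op}} \mc{M} \xrightarrow{T} \mc{A}\right).
\end{displaymath}
The only step that remains is to rewrite the indexing category. The paragraph just before the statement establishes that $(elF)^{op}$, whose objects are the pairs $\langle m,f\rangle$ and whose arrows $\tau^{\ast}: \langle m^{\prime},f^{\prime}\rangle \rightarrow \langle m,f\rangle$ are induced by arrows $\tau: m \rightarrow m^{\prime}$ of $\mc{M}$ satisfying $\tau^{\ast}f^{\prime}=f$, is \emph{exactly} the comma category $(K\downarrow c)$. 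Substituting this identification into the colimit above produces the claimed isomorphism.

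I expect the only genuinely delicate point to be this final bookkeeping: one must check that, under the identification $(elF)^{op} = (K\downarrow c)$, the projection $Q^{op}$ supplied by Proposition \ref{repcolim2} agrees with the functor $Q^{op}$ named in the statement, and that the direction of the induced arrows $\tau^{\ast}$ is consistent with the comma-category conventions fixed in Section 3 (in particular that the contravariance in $F$ and the passage to $(elF)^{op}$ cancel to give a \emph{covariant} base $(K\downarrow c) \rightarrow \mc{M}$). Once these directions are matched, no further computation is needed, since everything else is the formal dual of Proposition \ref{rightkandef}.
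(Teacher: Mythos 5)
Your proposal is correct and follows essentially the same route as the paper: specialize Proposition~\ref{repcolim2} to $F=\mc{C}(K-,c)$ and $G=T$, then invoke the identification of $(el\,F)^{op}$ with the comma category $(K\downarrow c)$ established in the preceding paragraph. The extra checks you flag (smoothness of $F$ and the matching of arrow directions under that identification) are sensible but are already supplied by the discussion immediately before the statement, so no further argument is needed.
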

\begin{proof}
From proposition \eqref{repcolim2}, we have for $F = \mc{C}(K-,c)$ and $G=T: \mc{M} \longrightarrow \mc{A}$ the result
\begin{displaymath}
\colim \left[(elF)^{op} \xlongrightarrow{Q^{op}} \xlongrightarrow{T} \right] \simeq F * T
\end{displaymath}
Since $(el F) = (K \downarrow c)$, the result holds. 
\end{proof}
\begin{thm}\label{leftkanexists}
Let $K: \mc{M} \longrightarrow \mc{C}$ and $T: \mc{M} \longrightarrow \mc{A}$ be smooth functors of diffeological categories. The left Kan extension of $T$ along $K$ is given by the functor $L: \mc{C}\rightarrow \mc{A}$, given by $Lc = \mc{C}(K-,c) * T$, whenever it exists.  
\end{thm}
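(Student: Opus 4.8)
The plan is to dualize the proof of the right Kan extension theorem line for line, replacing the universal arrow of the indexed limit \eqref{eq:limit} by the universal cylinder of the indexed colimit \eqref{eq:colimit}, the counit by a unit, and the adjunction \eqref{adjunction1} by \eqref{adjunction2}. Throughout I may invoke the preceding proposition, which already identifies $Lc = \mc{C}(K-,c)*T$ with the colimit of $T$ over the comma category $(K\downarrow c)$, together with the representability isomorphism \eqref{repcolim2}.

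First I would construct a smooth unit $\eta: T \dot{\longrightarrow} LK$. Since $Lc$ is the colimit of $T$ indexed by $\mc{C}(K-,c)$, the defining diffeomorphism \eqref{eq:colimit} supplies a universal cylinder $\mu^{(c)}: \mc{C}(K-,c) \dot{\longrightarrow} \mc{A}(T-, Lc)$, whose component at an object $m$ of $\mc{M}$ is a smooth map $\mu^{(c)}_m: \mc{C}(Km,c) \to \mc{A}(Tm, Lc)$. Setting $c = Km$ and evaluating at the identity, I define $\eta_m := \mu^{(Km)}_m(1_{Km}): Tm \to LKm$, smooth because $\mu^{(Km)}_m$ and evaluation are smooth. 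To check that $\eta$ is natural, I would run the dual of the chase used for $\varepsilon$: for $g: m \to n$ the compatibility defining $L$ on the morphism $Kg$ gives $LKg \circ \eta_m = \mu^{(Kn)}_m(Kg \circ 1_{Km})$, while naturality of $\mu^{(Kn)}$ in the $\mc{M}$-variable gives $\mu^{(Kn)}_m(1_{Kn}\circ Kg) = \eta_n \circ Tg$; the identity $Kg \circ 1_{Km} = 1_{Kn}\circ Kg$ then forces $LKg \circ \eta_m = \eta_n \circ Tg$.

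Next I would establish universality through the end/coend calculus, mirroring the right-Kan computation with \eqref{eq:colimit} in place of \eqref{eq:limit}. Starting from
\[
Nat(L,S) = \mc{A^{C}}(L,S) \simeq \int_c \mc{A}(Lc, Sc) = \int_c \mc{A}\bigl(\mc{C}(K-,c)*T,\, Sc\bigr),
\]
I apply \eqref{eq:colimit} with $F = \mc{C}(K-,c)$ and $G = T$ to rewrite the integrand as $\fD^{\mc{M}^{op}}[\mc{C}(K-,c), \mc{A}(T-, Sc)] = \int_m \fD[\mc{C}(Km,c), \mc{A}(Tm, Sc)]$. Fubini for ends swaps the order of integration, and the covariant Yoneda isomorphism collapses the inner end $\int_c \fD[\mc{C}(Km,c), \mc{A}(Tm, Sc)] \simeq \mc{A}(Tm, SKm)$; the remaining outer end is $\int_m \mc{A}(Tm, SKm) \simeq \mc{A^{M}}(T, SK) = Nat(T, SK)$. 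This is precisely the adjunction \eqref{adjunction2}, and Yoneda's proposition then identifies $\eta$ as the universal arrow, so that $(L, \eta)$ is a left Kan extension of $T$ along $K$.

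The main obstacle I anticipate is not the formal coend bookkeeping, which transcribes essentially verbatim from the right-Kan case, but the smoothness and functoriality points that a purely formal dualization tends to sweep under the rug. The theorem defines $L$ only on objects $c$, so one must still check that $c \mapsto \mc{C}(K-,c)*T$ assembles into a \emph{smooth} functor on the morphisms of $\mc{C}$, obtained via the universal property of the colimit together with smooth dependence of the comparison maps; as the naturality computation above shows, this functorial action on $Kg$ is precisely what makes $\eta$ natural, so it cannot be deferred. Equally, one must verify that Fubini and the co-Yoneda reduction hold as diffeomorphisms of diffeological spaces, not merely as bijections of underlying sets. Both points rest on the cartesian-closed, small-(co)complete structure of $\fD$ recorded in Section 2 and on the representability results \eqref{repcolim2} and \eqref{eq:colimit}; once these are invoked the argument closes.
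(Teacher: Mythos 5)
Your proposal follows essentially the same route as the paper's own proof: the unit $\eta_m = \mu^{(Km)}_m(1_{Km})$ is exactly the paper's $\varepsilon_m = \lambda_{m,Km}(1_{Km})$, the naturality check via $Kg\circ 1_{Km} = 1_{Kn}\circ Kg$ is identical, and the universality argument via $\int_c\int_m \fD[\mc{C}(Km,c),\mc{A}(Tm,Sc)]$, Fubini, and Yoneda reproduces the paper's coend computation. Your closing remarks on the smoothness of the functorial action of $L$ on morphisms and on Fubini/Yoneda holding as diffeomorphisms are points the paper leaves implicit, but they do not change the argument.
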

\begin{proof}
From definition \eqref{indexformulas}, there exits a universal arrow $\lambda_{-,c}: \mc{C}(K-,c) \dot{\longrightarrow} \mc{A}(T-,Lc)$. Suppose $c = Km$ for some object $m$ of $\mc{M}$. We obtain the transformation $\lambda_{-,Km}: \mc{C}(K-,Km) \dot{\longrightarrow} \mc{A}(T-,LKm)$. Choosing the first component of $\lambda$ to be $m$, we get
\begin{displaymath}
\lambda_{m,Km}: \mc{C}(Km,Km) \dot{\longrightarrow} \mc{A}(Tm,LKm)
\end{displaymath} 
Define $\varepsilon: T \dot{\longrightarrow} LK$ by $\varepsilon_{m}= \lambda_{m,Km}(1_{Km}): Tm \longrightarrow LKm$. Just like in the  proof for the case of the right Kan extension, we show that this definition gives a universal natural transformation. Let $g:m\rightarrow n$ be an arrow in $\mc{M}$. We have the equality $1_{Kn} \circ Kg = Kg \circ 1_{Km}$. Consider the following commutative diagrams:
\begin{equation}\label{commdiag1}
\xymatrixcolsep{5pc}\xymatrixrowsep{4pc}\xymatrix{\mc{C}(Km,Km) \ar[r]^{\lambda_{m,Km}}\ar[d]_{\mc{C}(1,Kg)} & \mc{A}(Tm,LKm)\ar[d]^{\mc{A}(1,LKg)}\\
\mc{C}(Km,Kn) \ar[r]^{\lambda_{m,Kn}} & \mc{A}(Tm,LKn)\\
\mc{C}(Kn,Kn) \ar[r]^{\lambda_{Kn,n}}\ar[u]^{\mc{C}(Kg,1)} & \mc{A}(Tn,LKn)\ar[u]_{\mc{A}(Tg,1)}}
\end{equation}
Both the upper and lower squares commute, since $\lambda$ is a universal arrow. Splitting the diagram into two components and evaluating the identity map for each component gives
\begin{equation}\label{comdiag2}
\xymatrixcolsep{4pc}\xymatrixrowsep{4pc}\xymatrix{1_{Km}\ar@{|->}[r]\ar@{|->}[d] & \varepsilon_{m}\ar@{|->}[d] \\
Kg \circ 1_{Km}\ar@{|->}[r] & \lambda_{m,Kn}(Kg \circ 1_{Km})= LKg \circ \varepsilon_{m} }
\end{equation}
\begin{equation}\label{comdiag3}
\xymatrixcolsep{4pc}\xymatrixrowsep{4pc}\xymatrix{1_{Kn} \circ Kg\ar@{|->}[r] & \lambda_{m,Kn}(1_{Kn} \circ Kg)= Tg \circ \varepsilon_{n}\\
1_{Kn}\ar@{|->}[r]\ar@{|->}[u] & \varepsilon_{n}\ar@{|->}[u] }
\end{equation}
From the result $1_{Kn}\circ Kg = Kg \circ 1_{Km}$ we get $Tg \circ \varepsilon_{n}=
LKg \circ \varepsilon_{m}$, establishing the natural transformation $\varepsilon: T \dot{\longrightarrow}LK$. This gives the diagram
\begin{displaymath}
\xymatrixcolsep{5pc}\xymatrix{Tm\ar[r]^{\varepsilon_{m}}\ar[d]_{Tg} & LKm\ar[d]^{LKg}\\
Tn\ar[r]_{\varepsilon_{n}} & LKn \\
}
\end{displaymath}
Finally, we show that $\varepsilon: T \dot{\longrightarrow} LK$ is unique up to natural isomorphism. Let $S: \mc{C} \longrightarrow \mc{A}$ be a smooth functor with $\sigma: T \dot{\longrightarrow} SK$ a natural transformation. We have:
\begin{displaymath}
\begin{aligned}
Nat(L,S) &\simeq \mc{A}^{\mc{C}}(L,S)\\
&\simeq \int_{c}\mc{A}(Lc, Sc) \\
&\simeq \int_{c}\mc{A}(\mc{C}(K-,c)*T,Sc)\\
\end{aligned}
\end{displaymath}
by the definition of $Lc$. Applying equation \eqref{repcolim2}, we have
\begin{displaymath}
\begin{aligned}
\mc{A}(\mc{C}(K-,c)*T,Sc) &\simeq \fD^{\mc{M}^{op}}\left[\mc{C}(K-,c),\mc{A}(T-,Sc) \right]\\
\text{ Therefore } \int_{c}\mc{A}(\mc{C}(K-,c)*T,Sc) &\simeq \int_{c}\fD^{\mc{M}^{op}}\left[\mc{C}(K-,c),\mc{A}(T-,Sc) \right]\\
& \simeq \int_{c}\int_{m}\fD\left[\mc{C}(Km,c),\mc{A}(Tm, Sc) \right]\\
\end{aligned}
\end{displaymath}
Applying the Fubini theorem, we get
\begin{displaymath}
\begin{aligned}
\int_{c}\mc{A}(\mc{C}(K-,c)*T,Sc) &\simeq \int_{m}\int_{c}\fD\left[\mc{C}(Km,c),\mc{A}(Tm,Sc) \right]\\
&\simeq \int_{m}\fD^{\mc{C}}\left[\mc{C}(Km,-), \mc{A}(Tm, S-) \right]\\
&\simeq \mc{A}(Tm, SKm)\\
&\simeq \mc{A}^{\mc{M}}(T,SK) = Nat (T, SK)\\
\end{aligned}
\end{displaymath}
Hence by Yoneda's Proposition (\cite{Mac}, Proposition $III.2.1$), the isomorphism $ Nat(L,S) \simeq Nat(T, SK)$ gives the result that the natural transformation $\varepsilon: T \dot{\longrightarrow} LK$ is universal, establishing that $(L,\varepsilon)$ is a left Kan extension of $T$ along $K$.
\end{proof}

\section{The Proof of Theorem \ref{mainthm}}
Let us consider the category $\cC$ of topological (super) Hilbert spaces, where the arrows are continuous linear transformations. We can restrict this to the subcategory where the arrows are maps $T : V \rightarrow W$, with $||T|| \le 1$. As before, define the (super) Lie groups $\mathcal{G}$ and $\mathcal{H}$ to be $\dginf(M)$ and $\sinf(M)$ respectively, where $\mathcal{H}$ is a subgroup of $\mathcal{G}$. We can use the function space diffeology to make $\cC(V,W)$ into a diffeological space (induced from the structures on $V$ and $W$).

Now let $\mc{J}$ be an index category and $F: \mc{J}^{op} \longrightarrow \fD$, $G: \mc{J} \longrightarrow \mc{C}$ be smooth functors.

Let $\alpha: i \rightarrow j$ be an arrow in $\mc{J}$. Fix an object $c$ of $\mc{C}$ and let $\mu: F \dot{\longrightarrow} \mc{C}(G-,c)$ be an $(F,c)$- cylinder over $G$. This gives a commutative diagram of smooth maps
\begin{displaymath}
\xymatrixcolsep{5pc}\xymatrix{Fi \ar[r]^{\mu_{i}}& \mc{C}(Gi,c) \\
Fj \ar[u]^{F\alpha} \ar[r]_{\mu_{j}}& \mc{C}(Gj,c)\ar[u]_{\mc{C}(G\alpha,c)}}
\end{displaymath}
The commutativity of the diagram gives the following relation: for $f_{j}$ in $Fj$ and an element $g_{i}$ in $Gi$,
\begin{equation}
\mu_{i}\left(F\alpha (f_{j})\right) g_{i} = \mu_{j}(f_{j}) G\alpha( g_{i})
\end{equation}
The above diagram gives for each element $f_{i}$ in $F_{i}$ a continuous linear transformation $\mu_{i}(f_{i}): Gi \longrightarrow c$ with norm 
\begin{equation}\label{firstnorm}
||\mu_{i}(f_{i})|| = \sup_{\substack{g_{i}\in Gi\\g_{i}\ne 0}} \frac{||\left(\mu_{i}(f_{i})\right)(g_{i})||}{||g_{i}||} \le 1
\end{equation} 
We have an induced map
\begin{displaymath}
\begin{aligned}
\phi_{i}: Fi \times Gi & \longrightarrow c \\
\phi_{i}(f_{i},g_{i}) &:= \left(\mu_{i}(f_{i})\right)(g_{i})\\ 
\end{aligned}
\end{displaymath}
This map $\phi$ is continuous and linear in the second variable. 
Now consider the ordered pair $(f_{i},g_{i})$ in the cartesian product $Fi \times Gi$. 
We will denote $(f_{i},g_{i})$ by $f_{i}\cdot g_{i}$. 
Define $f_{i}\cdot Gi$ to be the set $\left\{\,f_{i}\cdot g_{i} \,|\, g_{i}\in Gi\, \right\}$. 
Since $f_{i}\cdot Gi$ is in 1-1 correspondence with $Gi$, we transport the Hilbert space structure to $f_{i}\cdot Gi$, making $f_{i}\cdot Gi$ a Hilbert space. Hence for all $g_{i}$ in $Gi$, $||f_{i}\cdot g_{i}|| = ||g_{i}||$. 
Consider the Hilbert space direct sum
\begin{displaymath}
V:= \sum_{i\in \mc{J}} \sum_{f_{i}\in F_{i}} f_{i}\cdot Gi
\end{displaymath}

Now define $R \subseteq {V}$ as the span of the set
\begin{displaymath}
\left\{\,F\alpha (f_{j})\cdot g_{i} - f_{j}\cdot G\alpha( g_{i}) \,|\, i,j \in \mc{J} \right\}
\end{displaymath}
where $\alpha: i \rightarrow j$ is an arrow in $\mc{J}$ as defined previously.
The subspace $R$ is well defined, since $F\alpha(f_{j})$ is in $Fi$ and $G\alpha(g_{i})$ is in $Gj$.  

\begin{thm}
The orthogonal complement of $R$, $R^{\bot} = F \ast G$.
\end{thm}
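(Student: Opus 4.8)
The plan is to verify that $R^{\bot}$, equipped with a canonical cylinder, satisfies the universal property \eqref{eq:colimit} of the indexed colimit; since $F\ast G$ is determined up to isomorphism by that property, this will force $R^{\bot}\simeq F\ast G$. Write $P:V\rightarrow R^{\bot}$ for the orthogonal projection and, for each object $i$ of $\mc{J}$ and each point $f_{i}\in Fi$, let $\iota_{i,f_{i}}:Gi\rightarrow V$ be the isometry $g_{i}\mapsto f_{i}\cdot g_{i}$. Since $R$ is a subspace of the Hilbert space $V$, the projection $P$ is a contraction with $\ker P=\overline{R}$, and $P$ identifies the quotient $V/\overline{R}$ isometrically with $R^{\bot}$.

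First I would construct the universal $(F,R^{\bot})$-cylinder under $G$. Define $\mu_{i}:Fi\rightarrow\mc{C}(Gi,R^{\bot})$ by $\mu_{i}(f_{i})=P\circ\iota_{i,f_{i}}$, that is $\mu_{i}(f_{i})(g_{i})=P(f_{i}\cdot g_{i})$. Each $\mu_{i}(f_{i})$ is a contraction, being the composite of the isometry $\iota_{i,f_{i}}$ with $P$, so it is a genuine arrow of $\mc{C}$; smoothness of $\mu_{i}$ in $f_{i}$ follows from smoothness of $F$ and $G$ and the function-space diffeology on $\mc{C}(Gi,R^{\bot})$. For an arrow $\alpha:i\rightarrow j$ the naturality square reduces to $P(F\alpha(f_{j})\cdot g_{i})=P(f_{j}\cdot G\alpha(g_{i}))$, which holds precisely because the generator $F\alpha(f_{j})\cdot g_{i}-f_{j}\cdot G\alpha(g_{i})$ lies in $R\subseteq\ker P$. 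Thus $\mu$ is an $(F,R^{\bot})$-cylinder under $G$ in the sense of the definition preceding \eqref{eq:colimit}.

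Next I would establish universality. Given any object $c$ and any cylinder $\mu':F\dot{\rightarrow}\mc{C}(G-,c)$, define a linear map $\tilde{t}$ on the algebraic direct sum inside $V$ by $\tilde{t}(f_{i}\cdot g_{i})=\mu'_{i}(f_{i})(g_{i})$, extended linearly. Naturality of $\mu'$ gives $\mu'_{i}(F\alpha(f_{j}))(g_{i})=\mu'_{j}(f_{j})(G\alpha(g_{i}))$, so $\tilde{t}$ annihilates every generator of $R$, hence all of the algebraic span of $R$. Once the boundedness step below is in hand, $\tilde{t}$ descends along $P$ to a contraction $t:R^{\bot}\rightarrow c$ with $t\circ P=\tilde{t}$, and a direct check yields $\mu'_{i}=\mc{C}(1,t)\circ\mu_{i}$. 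Uniqueness of $t$ follows because the vectors $P(f_{i}\cdot g_{i})$ span a dense subspace of $R^{\bot}$. Naturality in $c$ of the resulting bijection then identifies $R^{\bot}$ with $F\ast G$.

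The hard part will be the one genuinely analytic step, namely showing that the descended map $t:R^{\bot}\rightarrow c$ is again a contraction (so that it is an arrow of $\mc{C}$) and, equivalently, that $\tilde{t}$ extends continuously from the algebraic direct sum to all of $R^{\bot}$. The bound \eqref{firstnorm} controls each component $\mu'_{i}(f_{i})$ separately, but on a Hilbert-space $\ell^{2}$ sum a family of contractions need not assemble into a contraction, so I expect to have to use the orthogonality to $R$ in an essential way. Concretely, writing $w=(w_{i,f_{i}})\in R^{\bot}$, orthogonality to the generators is equivalent to the compatibility relations $w_{i,F\alpha(f_{j})}=(G\alpha)^{\ast}w_{j,f_{j}}$ for every $\alpha:i\rightarrow j$; these identifications prevent the components from adding up in phase and should, via a Bessel/Parseval-type estimate, give both absolute convergence of $\sum_{i,f_{i}}\mu'_{i}(f_{i})(w_{i,f_{i}})$ and the inequality $\|t(w)\|\le\|w\|$. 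Making this estimate precise, in particular tracking how the relation subspace $R$ cuts down the redundant copies of each $Gi$, is the crux of the argument, the remaining verifications being formal consequences of the diffeological enrichment already set up.
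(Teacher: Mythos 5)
You follow the same route as the paper: build the candidate cylinder by composing the isometries $g_{i}\mapsto f_{i}\cdot g_{i}$ with the orthogonal projection onto $R^{\bot}$, check naturality from the fact that the generators of $R$ lie in the kernel of that projection, and obtain the comparison map $t$ by descending $\tilde{t}(f_{i}\cdot g_{i})=\mu'_{i}(f_{i})(g_{i})$ along the projection. All of that matches the paper's argument step for step. The problem is that the one step you label as ``the hard part'' --- showing that $\tilde{t}$ is bounded (indeed a contraction) on the orthogonal complement of $R$, and not merely on each summand $f_{i}\cdot Gi$ separately --- is exactly the step you do not carry out. Your own reason for worrying is correct: on an $\ell^{2}$-sum, a family of contractions $\beta_{i}(f_{i}):Gi\rightarrow c$ induces a map on the algebraic direct sum whose norm can grow like the square root of the number of summands whose images add in phase, so the componentwise bound \eqref{firstnorm} is genuinely insufficient. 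The proposed Bessel/Parseval estimate ``using orthogonality to $R$'' is a plausible place to look, but you have not exhibited it, and it is not clear it exists in general: when $\mc{J}$ has few arrows, $R$ is small (in the extreme case $R=0$ and $R^{\bot}=V$), the compatibility relations you invoke are vacuous, and the interference problem remains. So as written the proposal establishes that $R^{\bot}$ carries a cylinder and that any competing cylinder factors through it linearly and uniquely on a dense subspace, but not that the factoring map is an arrow of $\mc{C}$; the universal property \eqref{eq:colimit} is therefore not verified.

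For what it is worth, the paper's own treatment of this step is the norm computation \eqref{normT}, which takes the supremum of $\|\beta_{i}(f_{i})g_{i}\|/\|\pi(f_{i}\cdot g_{i})\|$ over single vectors of the form $\pi(f_{i}\cdot g_{i})$ and then splits into the cases $\|\pi(f_{i}\cdot g_{i})\|=\|f_{i}\cdot g_{i}\|$ and $\|\pi(f_{i}\cdot g_{i})\|<\|f_{i}\cdot g_{i}\|$. That computation bounds $T$ only on those special vectors, not on arbitrary limits of linear combinations of them inside $R^{\bot}$, and it moreover assumes that the $R$-component of $f_{i}\cdot g_{i}$ is a single generator rather than an element of the closed span. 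So the difficulty you isolate is real and is not actually resolved in the source either; but identifying the gap is not the same as closing it, and your proposal is incomplete at precisely the point on which the whole theorem turns.
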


\begin{proof}
By the definition of $R$, the orthogonal projection of  
${F\alpha (f_{j})\cdot g_{i}} - {f_{j}\cdot G\alpha( g_{i})}$ gives the zero vector in $R^{\bot}$. 
We define the universal cylinder $\lambda: F \dot{\longrightarrow} \mc{C}(G-, R^{\bot})$ by means of the projection $Fi \times Gi \hookrightarrow V \xlongrightarrow{\pi} R^{\bot}$ , where the projection is given by the composition
\begin{displaymath}
f_{i}\cdot g_{i} \longmapsto \sum f_{i}\cdot g_{i} \longrightarrow \pi(f_{i}\cdot g_{i})
\end{displaymath}
where the sum is taken over all $i$ in $\mc{J}$ and $g_{i}$ in $G_{i}$. This gives $\lambda_{i}: Fi \rightarrow \mc{C}(Gi, R^{\bot})$ as follows: fix an element $f_{i}$ of $Fi$. We have the map
 \begin{displaymath}
 \begin{aligned}
 \lambda_{i}(f_{i}): Gi &\longrightarrow R^{\bot}\\
 \left(\lambda_{i}(f_{i})\right)g_{i} &= \pi(f_{i}\cdot g_{i}) \\
 \end{aligned}
 \end{displaymath}
 We show that $\lambda_{i}(f_{i})$ is an arrow of $\mc{C}$. The norm of $\lambda_{i}(f_{i})$ is given by
 \begin{displaymath}
 ||\lambda_{i}(f_{i})|| = \sup_{\substack{g_{i}\in Gi\\ g_{i}\ne0}} \frac{||\pi(f_{i}\cdot g_{i})||}{||g_{i}||} \le \sup_{\substack{g_{i}\in Gi\\ g_{i}\ne0}} \frac{||f_{i}\cdot g_{i}||}{||g_{i}||} = 1
 \end{displaymath}
 since $\pi$ is an orthogonal projection and therefore has norm $||\pi|| \le 1$ and $||f_{i}\cdot g_{i}|| = ||g_{i}||$.
 

The naturality of $\lambda$ is given by the following diagram: 
\begin{displaymath}
\xymatrixcolsep{5pc}\xymatrix{Fi\ar[r]^{\lambda_{i}} & \mc{C}(Gi, R^{\bot}) \\
Fj\ar[u]^{F\alpha}\ar[r]_{\lambda_{j}} & \mc{C}(Gj, R^{\bot})\ar[u]_{\mc{C}(G\alpha,R^{\bot})}}
\end{displaymath}
All the maps in the diagram are smooth maps of diffeological spaces. 
We show that the diagram commutes. Let $f_{j}$ be an element of the diffeological space $Fj$. We have:
\begin{displaymath}
\begin{aligned}
\lambda_{i}(F\alpha(f_{j}))g_{i} &= \pi(F\alpha(f_{j})\cdot g_{i})\\
	&= \pi(f_{j}\cdot G\alpha(g_{i})) \quad \text{by the definition of }R\\
	&= \lambda_{j}(f_{i})(G\alpha(g_{i})). 
\end{aligned}
\end{displaymath} 
Therefore the diagram commutes and $\lambda: F \dot{\longrightarrow} \mc{C}(G-,R^{\bot})$ is a cylinder. 
To show that $\lambda: F\dot{\longrightarrow} \mc{C}(G-, R^{\bot})$ is a universal cylinder, suppose we have another $(F,c)-$cylinder $\beta: F \dot{\longrightarrow} (G-,c)$. This gives the commutative diagram illustrated below.

\begin{equation}\label{newcylinder}
\xymatrixcolsep{5pc}\xymatrix{Fi\ar[r]^{\beta_{i}} & \mc{C}(Gi,c)\\
Fj\ar[u]^{F\alpha}\ar[r]_{\beta_{j}} & \mc{C}(Gj,c)\ar[u]_{\mc{C}(G\alpha,c)}}
\end{equation}
Therefore for each $f_{j}$ in $Fj$, $\beta_{i}(F\alpha (f_{j}))(g_{i})=\beta_{i}(f_{j})(G\alpha (g_{i}))$.

The unique map $T: R^{\bot} \rightarrow c$ is defined as $T\left[\pi\left(f_{i}\cdot g_{i}\right) \right] = \left(\beta_{i}(f_{i})\right)(g_{i})$. We show that $T$ is well defined by proving that the generators of $R$ under the action of $T$ give the zero vector in $c$. Pick a generator $F\alpha (f_{j})\cdot g_{i} - f_{j}\cdot G\alpha( g_{i})$ in $R$. Then $\pi(F\alpha (f_{j})\cdot g_{i} - f_{j}\cdot G\alpha( g_{i})) = \pi(F\alpha (f_{j})\cdot g_{i}) - \pi({f_{j}\cdot G\alpha( g_{i})}) = 0$ in $R^{\bot}$. We have 
\begin{displaymath}
\begin{aligned} 
T\left[\pi(F\alpha (f_{j})\cdot g_{i}) - \pi({f_{j}\cdot G\alpha( g_{i})}) \right] &= \beta_{i}(F\alpha (f_{j}))(g_{i})-\beta_{i}(f_{j})(G\alpha (g_{i}))\\
& = 0,
\end{aligned}
\end{displaymath}
by the commutative diagram \eqref{newcylinder}.
Hence for all pairs $(f_{i},g_{i})$ such that (by means of the inclusion map) $f_{i}\cdot g_{i}$ is in  $R$, $(\beta_{i}f_{i})= 0$ in $c$.

Now we compute the norm of $T$. We have the following commutative diagrams:
\begin{displaymath}
\xymatrixcolsep{5pc}\xymatrix{Fi \times Gi\ar[d]_{\pi}  \ar[r] & c\\
R^{\bot}\ar@{-->}[ru]_{T}}
\xymatrixcolsep{5pc}\xymatrix{(f_{i},g_{i})\ar@{|->}[d]  \ar@{|->}[r] & \beta_{i}(f_{i})g_{i}\\
\pi(f_{i}\cdot g_{i})\ar@{|-->}[ru]}
\end{displaymath}
We have
\begin{equation}\label{normT}
||T|| = \sup_{\substack{g_{i} \in Gi\\g_{i}\ne 0}}\frac{||\beta_{i}(f_{i})g_{i}||}{||\pi(f_{i}\cdot g_{i})||}
\end{equation}
Fix a pair $(f_{i},g_{i})$ in $Fi \times Gi$. We have that $||\pi(f_{i}\cdot g_{i})|| \le ||f_{i}\cdot g_{i}||$. If $||\pi(f_{i}\cdot g_{i})|| = ||f_{i}\cdot g_{i}||$, then we have 

\begin{displaymath}
\frac{||\beta_{i}(f_{i})g_{i}||}{||\pi(f_{i}\cdot g_{i})||}=\frac{||\beta_{i}(f_{i})g_{i}||}{||f_{i}\cdot g_{i}||}\le \frac{||g_{i}||}{||f_{i}\cdot g_{i}||} = 1.
\end{displaymath}
Now if $||\pi(f_{i}\cdot g_{i})|| < ||f_{i}\cdot g_{i}||$, then we can decompose $f_{i}\cdot g_{i}$ in $V$ into the components in $R$ and $R^{\bot}$ to get $f_{i}\cdot g_{i} = f_{i}\cdot g_{i}^{\prime} + f_{i}\cdot g_{i}^{\prime\prime}$, where $f_{i}\cdot g_{i}^{\prime}$ is in $R$ and $f_{i}\cdot g_{i}^{\prime\prime}$ is in $R^{\bot}$. We can express $f_{i}\cdot g_{i}^{\prime}$ as $f_{i}\cdot g_{i}^{\prime} = F\alpha (h_{j})\cdot g_{i}^{\prime} - h_{j} \cdot G\alpha(g_{i}^{\prime})$, where $h_{j}$ is in $F_{j}$ and $\alpha$ is an arrow in $\mc{J}$. Since $\beta$ is a $(F,c)-$cylinder under $G$, $\beta_{i}(f_{i})g_{i} = \beta_{i}\left[F\alpha (h_{j})\cdot g_{i}^{\prime} - h_{j} \cdot G\alpha(g_{i}^{\prime}) \right]=0$. Hence we have the following results:
\begin{displaymath}
\begin{aligned}
\text{In } R^{\bot},\quad ||\pi(f_{i}\cdot g_{i})|| &=||f_{i}\cdot g_{i}^{\prime\prime}|| = ||g_{i}^{\prime\prime}||\\
\text{In } c,\quad ||\beta_{i}(f_{i})g_{i}||&= ||\beta_{i}(f_{i})g_{i}^{\prime\prime}||\le ||g_{i}^{\prime\prime}||\\
\therefore\quad \frac{||\beta_{i}(f_{i})g_{i}||}{||\pi(f_{i}\cdot g_{i})||}&= \frac{||\beta_{i}(f_{i})g_{i}^{\prime\prime}||}{||f_{i}\cdot g_{i}^{\prime\prime}||} \le \frac{||g_{i}^{\prime\prime}||}{||g_{i}^{\prime\prime}||} = 1
\end{aligned}
\end{displaymath}
Therefore, in both cases we have   
\begin{displaymath}
||T|| = \sup_{\substack{g_{i} \in Gi\\g_{i}\ne 0}}\frac{||\beta_{i}(f_{i})g_{i}||}{||\pi(f_{i}\cdot g_{i})||} \le 1
\end{displaymath}
Hence T is an arrow in $\mc{C}$ and
$\lambda$, as defined is a universal cylinder.
We conclude that $R^{\bot} = F \ast G$.
\end{proof}
\emph{Proof of Theorem \ref{mainthm}}

Given a representation $T: \mathcal{H} \rightarrow \cC$, define the inclusion functor $I : \mathcal{H} \rightarrow \mathcal{G}$. The functor $T : \cH \rightarrow \cC$ is a representation of $\cH$. The left Kan extension $L : \cG \rightarrow C$ takes the form $L(\cG) = \textrm{Hom}(I-,G) \ast T)$, which exists by the proof of the previous theorem, and the completeness of the category $\fD$. Given a representation $S: \cG \rightarrow \cC$, the composite $SI$ is the restriction of the representation to $\cH$. 

The adjunction can therefore be expressed $Nat(L,S)  \simeq Nat(T, SI)$. Let $V = T(\cH)$ be an $\cH$-module, and $W$ a $\cG$-module. Denoting $L(G)$ by $I\cdot V$, we obtain the isomorphism (of diffeological spaces)  
\[
\textrm{Hom}_{\cG}(I\cdot V, W) \simeq \textrm{Hom}_{\cH}(V, W)
\]
establishing that $I\cdot V$ is the (co) induced representation of $\cG$. 

\section{Remarks}

\begin{rem}
The definition of diffeological category raises a question; are there examples of categories with the objects not necessarily diffeological spaces, such that the hom-sets have the structure of a diffeological space? It seems that the underlying structures would need to have the structure of a diffeological space, so we can use the function space diffeology. This means that the category $\fD$ is its own higher-category: for $G, H$ diffeological spaces, the morphism $\fD(G,H)$ is also a diffeological space. We can extend this to $2$-morphisms and so on. 
\end{rem}

\begin{rem}
Milnor discusses the issue of regularity of (infinite dimensional) Lie groups in \cite{Mil}. Essentially, a Lie group is regular if there exists an exponential map from the Lie algebra to the Lie group, relating representations of both the algebra and the group. In this paper, we have discussed diffeological Lie groups, and it is natural to ask whether they are regular. J-P. Magnot \cite{Mag} has shown that this is not the case, so not all diffeological Lie groups are regular. This leads to the question whether all regular diffeological Lie groups can be classified. 
\end{rem}

\bibliographystyle{alpha}
\bibliography{refs}

\end{document}